      \def\JELname{{\bfseries JEL Classification}\enspace}
      \def\JEL#1{\par\addvspace\medskipamount{\rightskip=0pt plus1cm
      \def\and{\ifhmode\unskip\nobreak\fi\ $\cdot$
      }\noindent\JELname\ignorespaces#1\par}}
\spnewtheorem{ass}{Assumption}{\bf}{\it}
\let\c@proposition\c@theorem
\let\c@remark\c@theorem
\let\c@lemma\c@theorem
\let\c@corollary\c@theorem
\let\c@definition\c@theorem
\let\c@example\c@theorem
\let\c@ass\c@theorem
 \numberwithin{equation}{section}
  \numberwithin{theorem}{section}
   \numberwithin{definition}{section}
    \numberwithin{example}{section}
    \numberwithin{proposition}{section}
       \numberwithin{lemma}{section}
    \numberwithin{remark}{section}
      \numberwithin{corollary}{section}
         \numberwithin{ass}{section}
\newcommand{\filt}[1]{\mathbb{#1}}
\newcommand{\sigalg}[1]{\mathcal{#1}}
\newcommand{\pare}[1]{\left(#1\right)}
\newcommand{\bra}[1]{\left[#1\right]}
\newcommand{\dbraco}[1]{[\kern-0.15em[ #1 [\kern-0.15em[}
\newcommand{\dbracc}[1]{[\kern-0.15em[ #1 ]\kern-0.15em]}
\newcommand{\set}[1]{\left\{#1\right\}}
\newcommand{\abs}[1]{\left\vert#1\right\vert}
\newcommand{\dfn}{:=}
\newcommand{\such}{\ | \ }
\newcommand{\sign}{\mathrm{sign}}
\newcommand{\ud}{\mathrm d}
\newcommand{\e}{\mathrm{e}}
\newcommand{\ess}{\mathrm{ess}}
\newcommand{\Pu}{{\mathbb{P}}}
\newcommand{\Qu}{{\mathbb Q}}
\newcommand{\Wu}{{\mathbb W}}
\newcommand{\E}{{\mathbb E}}
\newcommand{\Real}{\mathbb R}
\newcommand{\Natural}{\mathbb N}
\newcommand{\indic}{\mathbf{1}}
\newcommand{\B}{\mathcal{B}}
\newcommand{\Exp}{\mathcal E}
\newcommand{\X}{\mathcal{X}}
\newcommand{\R}{\mathcal{R}}
\newcommand{\nin}{n \in \Natural}
\newcommand{\kin}{k \in \Natural}
\begin{document}

\title{Filtration Shrinkage, the Structure of Deflators, and Failure of Market Completeness
}

\titlerunning{Filtration Shrinkage, Deflators, and Market Completeness}        

\author{Constantinos Kardaras         \and
        Johannes Ruf 
}


\institute{Constantinos Kardaras \at
	Department of Statistics\\
	London School of Economics and Political Science\\
	10 Houghton St\\
	London WC2A 2AE\\
              \email{k.kardaras@lse.ac.uk}           
           \and
          Johannes Ruf \at
	Department of Mathematics\\
	London School of Economics and Political Science\\
	10 Houghton St\\
	London WC2A 2AE\\
  \email{j.ruf@lse.ac.uk}           
}

\date{Received: date / Accepted: date}

\maketitle

\begin{abstract}
We analyse the structure of local martingale deflators projected on smaller filtrations. In a general continuous-path setting, we show that the local martingale part in the
multiplicative Doob-Meyer decomposition of projected local martingale deflators are themselves local martingale deflators in the smaller information market. Via use of a Bayesian filtering approach, we demonstrate the exact mechanism of how updates on the possible class of models under less information result in the strict supermartingale property of projections of such deflators. Finally, we demonstrate that these projections are unable to span all possible local martingale deflators in the smaller information market, by investigating a situation where market completeness is not retained under filtration shrinkage.
\keywords{Bayes rule \and Brownian motion \and Deflator \and L\'evy transform \and Local martingale \and  Market completeness \and Predictable representation property}
\subclass{60G44 \and 60H10 \and 91G20}
\JEL{C11 \and G13 \and G14}
\end{abstract}

\section{Introduction}

Optional projections of martingales to smaller filtrations retain the martingale property; for the class of local martingales, the latter preservation may fail. For instance, the projection of a nonnegative local martingale can only be guaranteed to be a supermartingale in the smaller filtration, but might fail to be a local martingale; see Stricker \cite{Stricker:1977} and F\"ollmer and Protter \cite{Foellmer_Protter_2010}.

Positive local martingales appear naturally as deflators in arbitrage theory. (See Sect.~\ref{S:notation} for definitions and a review of classical concepts in the theory of no-arbitrage.) Consider two nested, right-continuous filtrations $\filt{F} \subseteq\filt{G}$ and a continuous and $\filt{F}$-adapted process $S$, having the interpretation of the discounted price of a financial asset. Then, the existence of a strictly positive $\filt{G}$-local martingale $Y$ such that $Y S$ is also a $\filt{G}$-local martingale is equivalent to the so-called absence of arbitrage of the first kind.  If no such arbitrage opportunities are possible under $\filt{G}$, then the same is true under the smaller filtration $\filt{F}$; we refer to Sect.~\ref{S:notation} for a rigorous argument of this assertion. Hence, there must exist an $\filt{F}$-local martingale $L$   such that $L S$ is an $\filt{F}$-local martingale. Let now $\mathcal Y^{\filt{G}}$and $\mathcal Y^{\filt{F}}$  denote the set of all $\filt{G}$-adapted and $\filt{F}$-adapted local martingale deflators, respectively.  The above no-arbitrage considerations yield the implication 
\[
	\mathcal Y^{\filt{G}} \neq \emptyset \qquad \Longrightarrow \qquad \mathcal Y^{\filt{F}} \neq \emptyset.	
\]
It is natural to ask at this point if there is a direct way to construct an element of $Y^{\filt{F}}$ from a given $Y \in \mathcal Y^{\filt{G}}$.  The optional projection ${}^o Y$ of $Y$ on $\filt{F}$ is  not necessarily an $\filt{F}$-local martingale, as discussed above; hence it cannot be expected to be in $\mathcal Y^{\filt{F}}$. However, as our first main result, Theorem \ref{thm:1}, implies, the local martingale part $L$ of the \emph{multiplicative} Doob-Meyer $\filt{F}$-decomposition ${}^o Y = L(1-K)$ is an element of $\mathcal Y^{\filt{F}}$. Sect.~\ref{sec:proof1} contains the proof of Theorem~\ref{thm:1} and related results.

The previous motivates another natural question: when does the projection of $Y$ lose the local martingale property, that is, under which circumstances is it the case that $K_\infty > 0$?  In Sect.~\ref{sec:Bayes}, we investigate this question from a Bayesian viewpoint. As it turns out, whenever certain models (which were possible under the Bayesian prior) become impossible under the observed data (the stock price path, in this case), the projection of the deflator $Y$ loses the local martingale property, and $K$ increases.  In  Sect.~\ref{sec:dominating}, we generalise the Bayesian viewpoint, under the assumption that a certain dominating probability measure exists.

Markets admitting local martingale deflators are complete if, and only if, such a deflator is unique.  Since different local martingale deflators in $\mathcal Y^{\filt{G}}$ might have the same projection, it is easy to find an example such that a market is incomplete under $\filt{G}$ but complete under $\filt{F}$. Indeed, consider a complete market under $\filt{F}$ and add an independent Brownian motion to get to a filtration $\filt{G}$; then, the market is automatically incomplete under $\filt{G}$.

The reverse question is of more interest: given that the market is complete under $\filt{G}$, is it also complete under $\filt{F}$?  As it turns out, this is not always true; it is possible that certain $\filt{F}$-local martingale deflators do not result from the local martingale component of a projections of $\filt{G}$-local martingale deflators, and completeness in financial markets may be lost when we pass to smaller filtrations. We provide an explicit example in Sect.~\ref{sec:counterexample}.
This counterexample uses the L\'evy transformation $B$ of a standard Brownian motion $W$, namely
\[
B \dfn \int_0^\cdot \sign(W_u) \ud W_u = |W| - \Lambda,	
\]
where $\Lambda$ is the local time of $W$ at zero; see Revuz and Yor \cite[Theorem~VI.1.2]{RY}. In fact, we provide a rather general class of counterexamples, whose construction is of independent interest. To wit, let $\filt{F}^W$ and $\filt{F}^B$ denote the smallest right-continuous filtrations making $W$ and $B$ measurable, respectively. 
 Both $W$ and $B$ are standard Brownian motions with the predictable representation property on $\filt{F}^W$.  Furthermore, $B$ is a standard Brownian motion with the predictable representation property on $\filt{F}^B$, and it holds that $\filt{F}^B = \filt{F}^{|W|}$; see Jeanblanc et al.~\cite[Sect.~5.8.2]{JYC_2009}. The information lost when passing from $W$ to $B$ consists of the signs of the excursions of $W$; in view of Blumenthal \cite[ page~114]{Blumenthal:1992}, conditional on $\sigalg{F}^B_\infty = \sigalg{F}^{|W|}_\infty$, these signs are independent and identically distributed. Given that there are countably many excursions of $W$, there clearly exist non-deterministic $\sigalg{F}^W_\infty$-measurable random variables, which are independent of $\sigalg{F}^{B} _\infty$. As we argue in Theorem \ref{thm:main} in SubSect.~\ref{SS:5.2}, one may construct such random variables in an $\filt{F}^W$-adapted way: there exist $ \filt{F}^{W}$-stopping times with any prescribed probability law on the positive real line, independent of $\sigalg{F}^B_\infty$. The last result provides an interesting corollary: the existence of two nested filtrations $\filt{F} \subseteq\filt{G}$ and a one-dimensional continuous stock price process $S$, adapted to $\filt{F}$, such that the market is complete under $\filt{G}$ and under $\filt{F}$, but not under some ``intermediate information'' model. The material in SubSect.~\ref{SS:5.3} also yield a counterexample to a conjecture put forth in Jacod and Protter \cite{Jacod:Protter:2017}.

The complimentary problem of whether certain no-arbitrage and completeness conditions are preserved after filtration enlargement has been studied extensively is not considered in the present paper; we instead refer to \cite{Coculescu:2012}, \cite{Fontana:Jeanblanc:Song}, \cite{Jeanblanc:Song:2015}, \cite{Acciaio:Fontana:Kardaras}, \cite{Song:2016},    \cite{Aksamit:2017}, \cite{Song:2017}, \cite{Aksamit:2018}, \cite{Chau:Runggaldier:Tankov}, \cite{Fontana:2018}, \cite{Chau:Cosso:Fontana}, 
and the references therein. Filtration shrinkage and its effect on semimartingale characteristics has been studied by several researchers.  F\"ollmer and Protter \cite{Foellmer_Protter_2010}, Larsson \cite{Larsson_2013}, and Kardaras and Ruf \cite{Kardaras:Ruf:2018:filtration} consider reciprocals of Bessel
processes, projected on the filtration generated by one of its 
components, providing explicit examples where projections of nonnegative 
local martingale fail to be local martingales themselves.
Bielecki et al.~\cite{Bielecki:Jakubowski} discuss how the characteristics of semimartingales are related 
in different filtrations.  
Biagini et al.~\cite{Biagini:Mazzon} consider questions of 
``bubbles'' and arbitrage opportunities in absence of full information.

\section{Notation, Definitions, and Review of Classical Results} \label{S:notation}

In this section, we introduce the framework, and recall certain classical results which find use later on.

Fix a probability space $(\Omega, \sigalg{G}_\infty, \Pu)$, equipped with two right-continuous filtrations $\filt{F} \equiv (\sigalg{F}_t)_{t \geq 0}$ and $\filt{G}  \equiv (\sigalg{G}_t)_{t \geq 0}$, which are nested in the sense that $\filt{F} \subseteq \filt{G}$, i.e., $\sigalg{F}_t \subseteq \sigalg{G}_t$ holds for all $t \geq 0$. For a given process $X = (X_t)_{t \geq 0}$, we use $\filt{F}^X$ to denote the smallest right-continuous filtration that makes $X$ adapted. If $X$ is additionally nonnegative, let ${}^o X$ denote its $\filt{F}$-optional projection, which always exists but could take the value $\infty$; see, for example, Nikeghbali \cite[Theorem~4.1]{Nikeghbali:essay}. If $X$ is a semimartingale, we use $\Exp(X)$ to denote its stochastic exponential.

We consider an $\filt{F}$-adapted \emph{continuous}-path $\filt{G}$-semimartingale $S$, representing the price of a financial asset expressed in terms of a certain denomination. Everything that follows carries over to the case of a multi-asset case $S = (S^1, \cdots, S^d)$ for $d \in \mathbb{N}$, at the expense of more complicated notation; we refrain from considering multi-asset models as notation is already a bit heavy. However, we stress that continuity of the paths of $S$ will be important, as we shall explain at places.
All wealth is considered in terms of the same tradeable denomination, 
which is operationally the same as having an additional asset available 
with unit price.

The financial notions below can be considered under different filtrations, so we use $\filt{H}$ to generically denote either the ``small'' $\filt{F}$ or the ``large'' $\filt{G}$ filtration.

For given $x \geq 0$, let $\mathcal X^{\filt{H}} (x)$ denote the set of all \emph{nonnegative} wealth processes, i.e., all nonnegative  processes $V^{x,\theta}$ of the form
\begin{align*}
V^{x,\theta} = x + \int_0^\cdot \theta_u \ud S_u,
\end{align*}
where $\theta$ is $\filt{H}$-predictable and $S$-integrable. We set $\mathcal{X}^{\filt{H}} \dfn \bigcup_{x \geq 0} \mathcal{X}^{\filt{H}} (x)$.

For any $T > 0$, and $\xi \in L^0_+ (\sigalg{H}_T)$, we set
\[
x^{\filt{H}} (T, \xi) \dfn \inf \left \{ x \geq 0 : \, \exists V \in \mathcal{X}^{\filt{H}} (x) \text{ such that } V_T \geq \xi, \ \Pu \text{-a.e.} \right \}
\]
to be the hedging capital associated with $\xi$.

\begin{definition}
We shall say that the market is $\filt{H}$-\textbf{viable}, if $x^{\filt{H}} (T, \xi) = 0$ implies $\xi = 0$, $\Pu$-a.e., for any $T > 0$ and $\xi \in L^0_+ (\sigalg{H}_T)$.
\end{definition}

The concept of viability (for a specific filtration) is also known as absence of arbitrage of the first kind, or as the condition of locally, in time, no unbounded profit with bounded risk in Karatzas and Kardaras \cite{KK}.

\begin{definition}
An $\filt{H}$-\textbf{local martingale deflator} is a strictly positive $\filt{H}$-local martingale $Y$ such that $Y S$ is also a $\filt{H}$-local martingale. Correspondingly, an $\filt{H}$-supermartingale deflator is a strictly positive $\filt{H}$-supermartingale $Y$ such that $Y X$ is an $\filt{H}$-supermartingale for all $X \in \mathcal X$. \qed
\end{definition}

The class of all $\filt{H}$-local martingale deflators will be denoted by $\mathcal Y^{\filt{H}}$. 

\begin{theorem}[Choulli and Stricker \cite{Choulli:Stricker:1996}, Kardaras \cite{Kardaras_finitely}] \label{T:1FTAP}
	The following statements are equivalent:
	\begin{enumerate}
		\item\label{T:1FTAPi} The market is $\filt{H}$-viable.
		\item\label{T:1FTAPii} There exists an $\filt{H}$-local martingale deflator: $\mathcal Y^{\filt{H}} \neq \emptyset$. 
		\item\label{T:1FTAPiii} There exists an $\filt{H}$-supermartingale deflator.
		\item\label{T:1FTAPiv} Writing $S = A + M$, where $A$ is a continuous finite variation $\filt{H}$-adapted process and $M$ an $\filt{H}$-local martingale, it holds that $A = \int_0^\cdot H_u \ud [S, S]_u$, 
		where $H$ is an $\filt{H}$-predictable process such that the nondecreasing process $\int_0^\cdot H^2_u\ud [S, S]_u$ is $\Pu$-a.e.~finitely-valued.  
	\end{enumerate}  
\end{theorem}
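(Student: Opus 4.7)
The plan is to prove the chain (iv) $\Rightarrow$ (ii) $\Rightarrow$ (iii) $\Rightarrow$ (i) $\Rightarrow$ (iv). The first three implications reduce to direct stochastic-calculus manipulations, while the closing step (i) $\Rightarrow$ (iv), the genuine ``structure condition'' half, is where the substantive work lies.

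For (iv) $\Rightarrow$ (ii), I would exhibit the deflator explicitly. Continuity of $S$ together with the fact that $A$ is continuous of finite variation gives $[S,S] = [M,M]$, so the hypothesis supplies an $\filt{H}$-predictable $H$ with $A = \int_0^\cdot H_u \ud [M,M]_u$ and $\int_0^\cdot H_u^2 \ud [M,M]_u < \infty$ $\Pu$-a.e. Setting $N \dfn -\int_0^\cdot H_u \ud M_u$, a continuous $\filt{H}$-local martingale, the process $Y \dfn \Exp(N)$ is a strictly positive $\filt{H}$-local martingale; integration by parts yields
\[
  \ud (YS) = Y\,\ud S + S\,\ud Y + \ud [Y,S] = Y\,\ud M + YH\,\ud [M,M] - SYH\,\ud M - YH\,\ud [M,M] = Y(1 - SH)\,\ud M,
\]
so that $YS$ is a local martingale and $Y \in \mathcal{Y}^{\filt{H}}$.

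For (ii) $\Rightarrow$ (iii), any $Y \in \mathcal{Y}^{\filt{H}}$ is a nonnegative local martingale and hence a supermartingale; for $X = x + \int_0^\cdot \theta_u \ud S_u \in \mathcal{X}^{\filt{H}}$, the analogous integration-by-parts computation (using that both $Y$ and $YS$ are local martingales and that $S$ is continuous) shows $YX$ is a local martingale, hence a nonnegative supermartingale. For (iii) $\Rightarrow$ (i), let $Y$ be a supermartingale deflator and fix $T > 0$ and $\xi \in L^0_+(\sigalg{H}_T)$ with $x^{\filt{H}}(T,\xi) = 0$. Pick for each $n$ some $V^n \in \mathcal{X}^{\filt{H}}(1/n)$ with $V^n_T \geq \xi$; the supermartingale property of $YV^n$ gives $\E[Y_T \xi] \leq \E[Y_T V^n_T] \leq Y_0/n$. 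Letting $n \to \infty$ forces $\E[Y_T \xi] = 0$, and strict positivity of $Y_T$ yields $\xi = 0$ $\Pu$-a.e.

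The main obstacle is (i) $\Rightarrow$ (iv): viability must force both $\ud A \ll \ud [M,M]$ and square-integrability of the density. My plan is contraposition along the lines of Choulli--Stricker and Kardaras. One first performs a Lebesgue-type decomposition of $A$ with respect to $[M,M]$; on any predictable set on which $A$ is $[M,M]$-singular, or on which the candidate density fails to be locally square-integrable, one builds by localization and a careful scaling of the natural ``$\theta \propto H$'' strategy (truncated to that set) a sequence $V^n \in \mathcal{X}^{\filt{H}}(1/n)$ whose terminal values dominate a common nontrivial nonnegative $\xi$ in probability, contradicting viability. The delicate points are verifying $S$-integrability of the constructed $\theta$ on the ``bad'' predictable set, and extracting a common lower bound $\xi$ via a diagonalization or Koml\'os-type argument; continuity of $S$ is essential in both, since it is what reduces the structure condition to one involving only $[M,M]$ and legitimises the integration-by-parts manipulations throughout.
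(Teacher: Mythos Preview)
The paper does not prove Theorem~\ref{T:1FTAP}; it is quoted as a known result, with attribution to Choulli and Stricker \cite{Choulli:Stricker:1996} and Kardaras \cite{Kardaras_finitely}. So there is no proof in the paper to compare against.

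That said, your outline is sound and follows the standard route taken in those references. The implications (iv) $\Rightarrow$ (ii) $\Rightarrow$ (iii) $\Rightarrow$ (i) are correct as written; your integration-by-parts computation for $Y = \Exp(-\int H\,\ud M)$ is accurate, and the supermartingale-deflator argument for (iii) $\Rightarrow$ (i) is the usual one. For (i) $\Rightarrow$ (iv) you have correctly identified both the strategy (contraposition via constructing an arbitrage of the first kind on the predictable set where either absolute continuity or square-integrability fails) and the genuine technical hurdles (ensuring $S$-integrability of the truncated strategy and extracting a common limit $\xi$). If you were to flesh this out, the cleanest modern treatment is in \cite{Kardaras_finitely}, which handles the finite-dimensional continuous case directly; the Koml\'os-type extraction you mention is indeed the tool used there to pass from a sequence of near-arbitrages to a single nontrivial $\xi$.
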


Note that the \emph{structural condition} \eqref{T:1FTAPiv} in Theorem \ref{T:1FTAP} above implies that $S$-integrability of an $\filt{H}$-predictable process $\theta$ amounts to
\begin{equation} \label{eq:integr_under_NA1}
\int_0^\cdot \theta^2_u \ud [S, S]_u < \infty, \quad \Pu \text{-a.e.,}
\end{equation}
since the validity of \eqref{eq:integr_under_NA1} and the Cauchy-Schwartz inequality already imply that, $\Pu$-a.e.,
\begin{align*}
\int_0^\cdot | \theta_u | | \ud A_u | &\leq \int_0^\cdot | \theta_u | |H_u| \ud [S, S]_u \\
&\leq \left( \int_0^\cdot \theta^2_u \ud [S, S]_u \right)^{1/2} \left( \int_0^\cdot H^2_u \ud [S, S]_u \right)^{1/2} < \infty.
\end{align*}
In particular, under the structural condition \eqref{T:1FTAPiv}, $H$ is $S$-integrable, and we may define the specific $\filt{H}$-local martingale deflator $Y = 1 / \widehat{V}$, where
\begin{align} \label{eq:200826}
\widehat{V} \dfn \Exp \left( \int_0^\cdot H_u \ud S_u \right).
\end{align}
The above $\widehat{V}$ is a special wealth process in $\mathcal X^{\filt{H}} (1)$ called the $\filt{H}$-\textbf{num\'eraire}.

\begin{remark} \label{R:180525.1}
The nesting property $\filt{F} \subseteq {\filt{G}}$ seems to yield directly that
\begin{align}  \label{eq:180525.2}
\filt{G}\text{-viability implies } \filt{F}\text{-viability}.
\end{align}
However, the implication in \eqref{eq:180525.2} is a bit more subtle, the reason being that the inclusion $\X^{\filt{F}} \subseteq \X^{\filt{G}}$ is not in general true when $\filt{F} \subseteq {\filt{G}}$. Indeed, an $\filt{F}$-predictable process $\theta$ might be  $S$-integrable under $\filt{F}$, but not under $\filt{G}$. For example, assume that $\filt{F}$ is the natural filtration of a Brownian motion $W$ and that $\filt{G}$ is the smallest right-continuous filtration that makes $W$ adapted and $W_1$ a $\sigalg{G}_0$-measurable random variable; the process $\theta$ given by $\theta_t \dfn - 1 / \left( \sqrt{1-t} \log(1-t) \right) \indic_{\{t < 1\}}$, $t \in [0,1]$ is shown in Jeulin and Yor \cite{Jeulin:Yor:1979} to be $S$-integrable under $\filt{F}$, but not under $\filt{G}$; hence, in this example, $\X^{\filt{F}} \not \subseteq \X^{\filt{G}}$.
	
The previous remarks notwithstanding, $\filt{G}$-viability implies that $\X^{\filt{F}} \subseteq \X^{\filt{G}}$. Indeed, if an $\filt{F}$-predictable $\Real^d$-valued process $\theta$ is $S$-integrable, then it satisfies \eqref{eq:integr_under_NA1} \emph{a fortiori}, which then implies that it is also $S$-integrable in the filtration $\filt{G}$ in view of $\filt{G}$-viability and the discussion right before this Remark. Therefore, since $\filt{G}$-viability implies $\X^{\filt{F}} \subseteq \X^{\filt{G}}$, \eqref{eq:180525.2} follows.
\end{remark}

A wealth process $X \in \mathcal X^{\filt{H}}$ is called $\filt{H}$-\textbf{maximal} if, whenever $X' \in \mathcal X^{\filt{H}}$ is such that $X'_0 = X_0$ and $\Pu [X'_T \geq X_T] = 1$ for some $T \geq 0$, then, in fact, $\Pu [X'_T = X_T] = 1$.

\begin{definition}
The market $S$ is called $\filt{H}$-\textbf{complete} if, for any  $T > 0$ and $\xi \in L^0_+ (\sigalg{H}_T)$ with $x = x^{\filt{H}} (T, \xi) < \infty$, there exists a maximal $X \in \mathcal X^{\filt{H}} (x)$ such that $\Pu [X_T = \xi] = 1$.
\end{definition}

\begin{theorem}[Stricker and Yan \cite{Stricker:Yan}]
Assume $\filt{H}$-viability; equivalently, that $\mathcal Y^{\filt{H}} \neq \emptyset$. Then, the market is complete if, and only if, there exists only one $\filt{H}$-local martingale deflator.
\end{theorem}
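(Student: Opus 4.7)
The plan is to prove the two implications separately, using Theorem~\ref{T:1FTAP} to set up the specific $\filt{H}$-local martingale deflator $\widehat{Y} = 1 / \widehat{V}$ from \eqref{eq:200826} as a canonical reference point in $\mathcal{Y}^{\filt{H}}$.

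For the direction \emph{completeness $\Rightarrow$ uniqueness of the deflator}, I would fix two arbitrary elements $Y^1, Y^2 \in \mathcal{Y}^{\filt{H}}$ normalised to $Y^1_0 = Y^2_0 = 1$, and show that $\E[Y^1_T \xi] = \E[Y^2_T \xi]$ for every $T > 0$ and every $\xi \in L^\infty_+(\sigalg{H}_T)$; varying $\xi$ then gives $Y^1_T = Y^2_T$ a.s.\ for all $T$, hence indistinguishability. To produce this equality, note that $x \dfn x^{\filt{H}}(T, \xi) \leq \|\xi\|_\infty < \infty$, so by completeness there exists a maximal $X \in \mathcal{X}^{\filt{H}}(x)$ with $X_T = \xi$. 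For each $i$, the product $Y^i X$ is a nonnegative supermartingale (since every local martingale deflator is a supermartingale deflator), so $\E[Y^i_T \xi] \leq x$. For the reverse inequality, I would invoke the standard characterisation of $\filt{H}$-maximality: $X$ is maximal on $[0,T]$ if and only if there exists some $Y^\star \in \mathcal{Y}^{\filt{H}}$ for which $Y^\star X$ is a true martingale on $[0,T]$, so $\E[Y^\star_T \xi] = x$, and by the same completeness/duality argument applied with $Y^i$ in place of $Y^\star$ (and using that $\xi$ is bounded so $Y^i X$ is dominated by a uniformly integrable martingale multiple of $Y^i$) one obtains the matching equality $\E[Y^i_T \xi] = x$ for $i = 1, 2$.

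For the converse direction \emph{uniqueness $\Rightarrow$ completeness}, I would fix $\xi \in L^0_+(\sigalg{H}_T)$ with $x = x^{\filt{H}}(T, \xi) < \infty$ and apply the optional decomposition theorem (valid under $\filt{H}$-viability thanks to Theorem~\ref{T:1FTAP}) to the nonnegative process
\[
\widetilde{X}_t \dfn \frac{1}{\widehat{Y}_t} \operatorname*{ess\,sup}_{Y \in \mathcal{Y}^{\filt{H}},\, Y_0 = 1} \E\!\left[ Y_T \xi \,\middle|\, \sigalg{H}_t \right],
\]
which by the superhedging duality equals $x$ at time zero and dominates $\xi$ at time $T$. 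The optional decomposition yields an $\filt{H}$-predictable, $S$-integrable process $\theta$ and a nondecreasing process $C$ with $C_0 = 0$ such that $\widetilde{X} = x + \int_0^\cdot \theta_u \, \ud S_u - C$. The task is then to show that uniqueness of the deflator forces $C \equiv 0$ and $\widetilde{X}_T = \xi$ almost surely, for then $X \dfn \widetilde{X} \in \mathcal{X}^{\filt{H}}(x)$ is the required replicating wealth process, whose maximality is an automatic consequence of $\widehat{Y} X$ being a true martingale.

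The main obstacle is exactly this last step: transforming ``uniqueness of $\widehat{Y}$'' into ``$C \equiv 0$ and exact replication.'' The cleanest route I would follow is to repackage uniqueness as a predictable representation property for $\widehat{Y}$. Concretely, I would argue that if $N$ is any bounded $\filt{H}$-martingale with $N_0 = 0$ whose product with both $\widehat{Y}$ and $\widehat{Y} S$ is an $\filt{H}$-local martingale, then for sufficiently small $\varepsilon > 0$ the process $\widehat{Y}\, \Exp(\varepsilon N)$ is a strictly positive $\filt{H}$-local martingale with $\widehat{Y}\, \Exp(\varepsilon N) S$ also an $\filt{H}$-local martingale, hence another element of $\mathcal{Y}^{\filt{H}}$; uniqueness then forces $N \equiv 0$. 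This orthogonality-based PRP, once established, is precisely the ingredient needed to write the $\filt{H}$-martingale $t \mapsto \E[\widehat{Y}_T \xi \mid \sigalg{H}_t]$ as a stochastic integral against $\widehat{Y} S$ (together with a drift term absorbed into $\widehat{Y}$), and translates directly into $C \equiv 0$ and $\widetilde{X}_T = \xi$, closing the proof.
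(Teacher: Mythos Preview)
The paper does not prove this statement: it is listed in Sect.~\ref{S:notation} among the ``review of classical results,'' attributed to Stricker and Yan \cite{Stricker:Yan}, with no accompanying argument. There is therefore no paper proof against which to compare your proposal.

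That said, a brief comment on the proposal itself. Your ``uniqueness $\Rightarrow$ completeness'' direction is broadly the standard route (uniqueness of the deflator yields a predictable representation property, which then feeds into the optional decomposition to force $C \equiv 0$). The ``completeness $\Rightarrow$ uniqueness'' direction, however, contains a genuine gap. You correctly obtain $\E[Y^i_T \xi] \leq x$ from the supermartingale property of $Y^i X$, but the step where you claim the matching equality $\E[Y^i_T \xi] = x$ for \emph{every} $Y^i \in \mathcal{Y}^{\filt{H}}$ is not justified: maximality of $X$ only guarantees the existence of \emph{some} $Y^\star$ for which $Y^\star X$ is a true martingale, not that $Y^i X$ is a true martingale for an arbitrary $Y^i$. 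The sentence ``by the same completeness/duality argument applied with $Y^i$ in place of $Y^\star$'' is circular, and the remark about $\xi$ being bounded does not by itself upgrade a nonnegative local martingale $Y^i X$ to a uniformly integrable one. A cleaner argument for this direction is to note that under completeness every bounded $\sigalg{H}_T$-measurable random variable is of the form $x + \int_0^T \theta_u \, \ud S_u$ with a \emph{bounded} wealth process, so that for any $Y \in \mathcal{Y}^{\filt{H}}$ the local martingale $Y_\cdot \bigl(x + \int_0^\cdot \theta_u \, \ud S_u\bigr)$ is dominated by a constant multiple of $Y$ and is therefore a true martingale; this gives $\E[Y_T \xi] = x$ for all such $Y$, and hence uniqueness.
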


\section{Projections of Local Martingale Deflators} \label{sec:proof1}

\subsection{A first result}

The main result of this section is the following.

\begin{theorem} \label{thm:1}
Let $Y$ be a  $\filt{G}$-local martingale deflator for $S$ with $\filt{F}$-optional projection ${}^o Y$.  Consider the multiplicative decomposition ${}^o Y = L (1 - K)$, where $L$ is an $\filt{F}$-local martingale and $K$ a nondecreasing $\filt{F}$-predictable $[0,1)$-valued process with $K_0 = 0$.
Then, $L$ is an $\filt{F}$-local martingale deflator for $S$.
\end{theorem}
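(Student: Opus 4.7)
The plan is to verify the two defining properties of $L$ being an $\filt{F}$-local martingale deflator: strict positivity of $L$ as an $\filt{F}$-local martingale (immediate from the multiplicative decomposition ${}^o Y = L(1-K)$), and that $LS$ is an $\filt{F}$-local martingale. Only the latter requires work.

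For the latter, I first reduce to positive wealth processes. Set $\tau_n \dfn \inf\{t \geq 0 : |S_t| \geq n\}$; by continuity and $\filt{F}$-adaptedness of $S$, these are $\filt{F}$-stopping times with $\tau_n \uparrow \infty$ and $|S^{\tau_n}| \leq n$, so $V_n \dfn n + S^{\tau_n}$ is nonnegative and belongs to $\X^{\filt{F}}(n)$. It therefore suffices to prove the stronger claim that $LV$ is an $\filt{F}$-local martingale for every nonnegative $V \in \X^{\filt{F}}$: applying this to $V_n$ yields $LV_n - nL = LS^{\tau_n}$ as an $\filt{F}$-local martingale, from which a brief stopping argument and $n \to \infty$ give the theorem. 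For the reduced claim, $\X^{\filt{F}} \subseteq \X^{\filt{G}}$ by Remark~\ref{R:180525.1}, and for $V = V_0 + \int \theta\,\ud S \in \X^{\filt{F}}$ the identity $\ud(YV) = (V - \theta S)_-\,\ud Y + \theta_-\,\ud(YS)$ expresses $YV$ as a nonnegative $\filt{G}$-local martingale. Consequently, ${}^o(YV) = ({}^o Y) V = L(1-K) V$ is a nonnegative $\filt{F}$-supermartingale.

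The crux is to identify the $\filt{F}$-predictable Doob--Meyer compensator of this supermartingale as $\int V_- L_-\,\ud K$. Granted this identification, integration by parts gives
\[
\ud\bigl( L(1-K) V + \textstyle\int V_- L_-\,\ud K \bigr) = (1-K_-) \, \ud(LV),
\]
and since the left-hand side is an $\filt{F}$-local martingale while $(1-K_-)^{-1}$ is $\filt{F}$-predictable and locally bounded (by a standard localisation exploiting that $K < 1$ throughout), dividing shows that $LV$ is an $\filt{F}$-local martingale.

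The main obstacle is thus the compensator identification, and it is here that the full hypothesis that $Y$ is a $\filt{G}$-local martingale deflator enters. A natural approach combines a $\filt{G}$-localisation $\rho_k$ making $Y^{\rho_k}$ uniformly integrable with an $\filt{F}$-localisation bounding $V$, and then computes, for $\filt{F}$-stopping times $\sigma \leq \tau$, the tower-property identity $\E[({}^o Y)_\tau V_\tau - ({}^o Y)_\sigma V_\sigma \mid \sigalg{F}_\sigma] = \E[Y_\tau V_\tau - Y_\sigma V_\sigma \mid \sigalg{F}_\sigma]$; the $\filt{G}$-local martingale property of $YV$, together with a Fatou-type passage $k \to \infty$ (exploiting nonnegativity of ${}^o(YV)$), then identifies this conditional expectation in the limit with $-\E[\int_\sigma^\tau V L\,\ud K \mid \sigalg{F}_\sigma]$. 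The delicate technical point is that optional $\filt{F}$-projection does not commute with stopping at $\filt{G}$-stopping times, so the limit as $k \to \infty$ must be handled carefully; this is what makes the compensator identification, rather than the formal integration-by-parts step, the hard part of the proof.
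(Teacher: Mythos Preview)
Your argument correctly establishes that $({}^o Y)V = L(1-K)V$ is an $\filt{F}$-supermartingale for every nonnegative $V \in \X^{\filt{F}}$; this is precisely Proposition~\ref{P:180112.1} of the paper. The gap is in your ``crux'': the claimed compensator identification is \emph{equivalent} to the conclusion you are trying to prove, and your sketched argument does not establish it.

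Concretely, write the $\filt{F}$-decomposition $S = A^{\filt{F}} + M^{\filt{F}}$ and compute the predictable finite-variation part of $({}^o Y)V = L(1-K)V$ via integration by parts: it equals
\[
-\int V L_-\,\ud K \;+\; \int (1-K_-)\,\theta\,\bigl(L_-\,\ud A^{\filt{F}} + \ud[L^c, M^{\filt{F}}]\bigr).
\]
Thus your claimed compensator $\int V L_-\,\ud K$ is correct if and only if $L_-\,\ud A^{\filt{F}} + \ud[L^c, M^{\filt{F}}] = 0$, which (taking $\theta \equiv 1$) is exactly the statement that $LS$ is an $\filt{F}$-local martingale. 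Your Fatou/localisation sketch cannot produce this: the tower identity $\E[({}^o Y)_\tau V_\tau - ({}^o Y)_\sigma V_\sigma \mid \sigalg{F}_\sigma] = \E[Y_\tau V_\tau - Y_\sigma V_\sigma \mid \sigalg{F}_\sigma]$ combined with the $\filt{G}$-local martingale property of $YV$ and Fatou yields only an \emph{inequality} (the supermartingale property you already have), not an identification of the compensator with the specific process $\int V L_-\,\ud K$. There is nothing in the limiting argument that singles out this particular process; the ``defect'' in Fatou is precisely the unknown compensator.

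The paper breaks the circularity by a different route (Proposition~\ref{prop:supmart_defl_to_local_mart_defl}). Once one knows ${}^o Y$ is a supermartingale deflator, the argument forgets the projection entirely and works purely in $\filt{F}$: first change measure via the $\filt{F}$-num\'eraire so that $S$ becomes a local martingale, then Kunita--Watanabe decompose $L = \Exp(\int \theta\,\ud S)\,N$ with $[N,S]=0$, and finally test the supermartingale deflator property against the family $\Exp(n\int\theta\,\ud S)$ for all $n \in \Natural$. Yor's formula shows the finite-variation part of $Y \Exp(n\int\theta\,\ud S)$ contains the term $n\int\theta^2\,\ud[S,S] - C$, which can be nonincreasing for all $n$ only if $\int\theta^2\,\ud[S,S]=0$. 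This ``test against all $n$'' idea is the missing ingredient that converts the supermartingale property into the local martingale deflator conclusion.
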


Theorem~\ref{thm:1} will be immediate after the following two results have been established. Related to the first result is Gombani et al.~\cite[Proposition~3.1]{Gombani:Jaschke:Runggaldier}, where instead of local martingale deflators so-called linear price systems are considered.

\begin{proposition}  \label{P:180112.1}
Let $Y$ be a $\filt{G}$-supermartingale deflator for $S$. Then  its $\filt{F}$-optional projection ${}^o Y$ is a $\filt{F}$-supermartingale deflator for $S$.
\end{proposition}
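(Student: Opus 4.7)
The plan is to verify each defining property of an $\filt{F}$-supermartingale deflator for ${}^o Y$ in turn: strict positivity, the $\filt{F}$-supermartingale property, and the condition that $({}^o Y) X$ be an $\filt{F}$-supermartingale for every $X \in \X^{\filt{F}}$. Strict positivity follows from $Y_t > 0$ at each fixed $t \geq 0$, which gives ${}^o Y_t = \E[Y_t \mid \sigalg{F}_t] > 0$ $\Pu$-a.s.\ for each $t$; since a nonnegative c\`adl\`ag supermartingale is absorbed at $0$ once it hits $0$, pointwise strict positivity at each $t$ upgrades to strict positivity of the entire path.

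The remaining two properties can be handled simultaneously. Because $Y$ is a $\filt{G}$-supermartingale deflator, $\filt{G}$-viability holds by Theorem~\ref{T:1FTAP}, and by the argument recalled in Remark~\ref{R:180525.1} this yields the inclusion $\X^{\filt{F}} \subseteq \X^{\filt{G}}$. Hence for any $X \in \X^{\filt{F}}$ the product $YX$ is a $\filt{G}$-supermartingale. Since $X$ is continuous and $\filt{F}$-adapted, $X_t$ is $\sigalg{F}_t$-measurable, so
\[
{}^o(YX)_t \;=\; \E[X_t Y_t \mid \sigalg{F}_t] \;=\; X_t \, \E[Y_t \mid \sigalg{F}_t] \;=\; X_t \, {}^o Y_t.
\]
Invoking the classical fact that the $\filt{F}$-optional projection of a nonnegative $\filt{G}$-supermartingale is an $\filt{F}$-supermartingale, we conclude that $X \cdot {}^o Y$ is an $\filt{F}$-supermartingale. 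Specialising to $X \equiv 1 \in \X^{\filt{F}}$ also delivers the plain $\filt{F}$-supermartingale property of ${}^o Y$ itself.

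The only non-routine step is securing $\X^{\filt{F}} \subseteq \X^{\filt{G}}$, which, as Remark~\ref{R:180525.1} warns, is not automatic from $\filt{F} \subseteq \filt{G}$ alone; here it comes for free because the hypothesis that $Y$ is a $\filt{G}$-supermartingale deflator activates the structural condition \eqref{T:1FTAPiv} of Theorem~\ref{T:1FTAP}. Everything else is a direct application of the standard identity for optional projections of products in which one factor is already optional for the smaller filtration.
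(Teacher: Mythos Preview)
Your proof is correct and follows essentially the same approach as the paper's. Both use Remark~\ref{R:180525.1} to secure the key inclusion $\X^{\filt{F}} \subseteq \X^{\filt{G}}$, and both reduce the supermartingale property of $({}^o Y) X$ to that of ${}^o(YX)$; the only cosmetic difference is that the paper writes out the tower-property chain $\E[{}^o Y_t X_t \mid \sigalg{F}_s] = \E[\E[Y_t X_t \mid \sigalg{G}_s] \mid \sigalg{F}_s] \leq {}^o Y_s X_s$ explicitly, whereas you package it as an appeal to the standard fact that optional projections of nonnegative $\filt{G}$-supermartingales onto $\filt{F}$ are $\filt{F}$-supermartingales. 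Your treatment of strict positivity is in fact slightly more careful than the paper's, which only verifies $\Pu[{}^o Y_t = 0] = 0$ for each fixed $t$; your observation about absorption at zero for nonnegative c\`adl\`ag supermartingales upgrades this to strict positivity of the whole path.
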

\begin{proof}
For any $0 \leq s \leq t < \infty$ and $X \in \X^{\filt{F}} \subseteq \X^{\filt{G}}$, thanks to Remark~\ref{R:180525.1}, it holds that
\begin{align*}
\E \bra{{}^o Y_t X_t \such \sigalg{F}_s} &= \E \bra{ \E \bra{{}^o Y_t X_t \such \sigalg{F}_t} \such \sigalg{F}_s} \\ 
&= \E \bra{ \E \bra{Y_t X_t \such \sigalg{F}_t} \such \sigalg{F}_s} \\
&=\E \bra{Y_t X_t \such \sigalg{F}_s} \\
&= \E \bra{ \E \bra{Y_t X_t \such \sigalg{G}_s} \such \sigalg{F}_s} \\
&\leq \E \bra{ Y_s X_s \such \sigalg{F}_s} \\
&= {}^o Y_s X_s.
\end{align*}
It remains to show that ${}^o Y$ is strictly positive. For this, fix $t \geq 0$, and note that
\[
0 = \E \bra{{}^o Y_t \indic_{\{{}^o Y_t = 0\}}} = \E \bra{Y_t \indic_{\{{}^o Y_t = 0\}}}.
\]
Since $\Pu \bra{Y_t = 0} = 0$, it follows that $\Pu \bra{{}^o Y_t = 0} = 0$.
\end{proof}

The filtration in the statement of Proposition~\ref{prop:supmart_defl_to_local_mart_defl} below is implicit.

\begin{proposition} \label{prop:supmart_defl_to_local_mart_defl}
Consider the multiplicative decomposition $Y = L (1 - K)$ of a supermartingale deflator $Y$ for the continuous semimartingale $S$, where $L$ is a local martingale and $K$ a nondecreasing predictable $[0,1)$-valued process. Then, $L$ is a local martingale deflator for $S$.
\end{proposition}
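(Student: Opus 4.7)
The plan is to exploit the supermartingale-deflator property of $Y$ against a rich one-parameter family of positive wealth processes, thereby forcing the predictable finite-variation part of $LS$ to vanish. Since the existence of a supermartingale deflator already implies viability, Theorem~\ref{T:1FTAP} furnishes the structural decomposition $S = M + \int_0^\cdot H_u\,d[S, S]_u$ with $M$ a continuous local martingale. For each $c \in \Real$, the test process $V^c \dfn \Exp(c(S-S_0))$ is a strictly positive continuous element of $\X(1)$ thanks to continuity of $S$, so $YV^c$ must be a supermartingale.

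The next step is to apply integration by parts to $Y = L(1-K)$, which gives $dY = (1-K_-)\,dL - L_-\,dK - d[L, K]$, and then to expand $d(YV^c) = Y_-\,dV^c + V^c\,dY + d[Y, V^c]$. Two observations are central. First, because $K$ is predictable of finite variation and a local martingale has vanishing conditional jump at any predictable stopping time, the process $[L, K] = \sum \Delta L\,\Delta K$ has zero predictable compensator and is therefore itself a local martingale. Second, continuity of $M$ makes $[L, M] = [L^c, M]$ continuous, hence predictable of finite variation. Collecting the predictable finite-variation contributions of $d(YV^c)$ then yields the random measure
\[
V^c \bra{c(1-K_-) \pare{L_- H\,d[S, S] + d[L, M]} - L_-\,dK},
\]
which must be nonpositive for every $c \in \Real$ in view of the supermartingale property of $YV^c$. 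Since $V^c(1-K_-) > 0$ and $L_-\,dK$ does not depend on $c$, the freedom to pick $c$ of arbitrary sign and magnitude forces the identity
\[
d[L, M] = -L_- H\,d[S, S].
\]

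Plugging this into the integration-by-parts formula
\[
d(LS) = L_-\,dM + S\,dL + L_- H\,d[S, S] + d[L, M]
\]
makes the last two summands cancel, so $LS$ reduces to a sum of stochastic integrals against local martingales and is therefore itself a local martingale. Together with $L > 0$ (which follows from $Y > 0$ and $K < 1$), this shows that $L$ is a local martingale deflator for $S$. I expect the main obstacle to be tracking carefully the potential jumps of $Y$ and the resulting jumps of $L$ and $K$: one must separate $[L, M]$ from $[L, K]$ in the canonical decomposition of $YV^c$ and verify that $[L, K]$ is itself a local martingale. Continuity of $S$ is what makes the scheme work, since it keeps every $V^c$ continuous, so that $[Y, V^c]$ inherits no jump contribution from $Y$, and it renders $[L, M]$ predictable of finite variation, which is precisely what enables the clean cancellation in the compensator of $LS$.
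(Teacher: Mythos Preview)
Your proof is correct. The paper takes a different route: it first changes measure via the num\'eraire $\widehat V$ so that $S$ becomes a local martingale, then applies the Kunita--Watanabe decomposition to write $L = \Exp\bigl(\int\theta\,\ud S\bigr)\,N$ with $[N,S]=0$, and finally tests $Y$ against the wealth processes $\Exp\bigl(n\int\theta\,\ud S\bigr)$ for $n\in\Natural$; Yor's formula then shows that the supermartingale property forces $\int\theta^2\,\ud[S,S]=0$, whence $L=N$ and $LS$ is a local martingale by orthogonality. Your approach bypasses both the measure change and Kunita--Watanabe, working directly with the structural decomposition of $S$ and the simpler test family $\Exp\bigl(c(S-S_0)\bigr)$, $c\in\Real$, at the price of having to isolate the jump term $[L,K]$ explicitly and argue that it is itself a local martingale. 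Both arguments rest on the same core mechanism---the supermartingale constraint against a one-parameter family of positive wealth processes pins down a drift---but the paper's reduction to a local-martingale $S$ renders the computation purely multiplicative via Yor's formula, which streamlines the bookkeeping once the reduction is in place, whereas your additive compensator calculation is more self-contained and uses fewer structural tools.
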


\begin{proof}
Thanks to Theorem~\ref{T:1FTAP}, the process $\widehat{V}$ in \eqref{eq:200826} exists. Note that
$1/ \widehat{V}$ is a local martingale. By stopping, we may assume without
loss of generality, that $\widehat{V}$ is a uniformly integrable
martingale and hence defines a probability measure $\mathbb
Q$, equivalent to $\mathbb P$. Then $S$ is a local $\mathbb 
Q$--martingale and it suffices to prove that the local $\mathbb 
Q$--martingale $\widehat{V} L$ is a local martingale deflator for $S$ under 
$\mathbb Q$. Upon changing to $\mathbb Q$ and replacing $Y$ and $L$ by 
$\widehat{V} Y$ and $\widehat{V} L$, respectively, we may and shall assume that 
$S$ is a local $\mathbb P$-martingale in everything below.

Because $L$ is strictly positive, the Kunita-Watanabe decomposition yields the representation $L = \Exp(\int_0^\cdot \theta_u \ud S_u) N$, where $\theta$ is predictable and $S$-integrable, and $N$ is a strictly positive local martingale such that $[N, S] = 0$ holds. In order to prove the statement, it now suffices to show that $L = N$, i.e., $\int_0^\cdot \theta_u \ud S_u = 0$.

For each $\nin$, consider the positive wealth process $\Exp(n \int_0^\cdot \theta_u \ud S_u) \in \X$. Since
\[
Y \Exp\left(n \int_0^\cdot \theta_u \ud S_u \right) = (1 - K) \Exp\left(\int_0^\cdot \theta_u \ud S_u\right) \Exp\left(n \int_0^\cdot \theta_u \ud S_u \right) N
\]
is a supermartingale, $N$ is a strictly positive local martingale strongly orthogonal to the continuous semimartingale $S$ and $(1 - K) \Exp(\int_0^\cdot \theta_u \ud S_u) \Exp(n \int_0^\cdot \theta_u \ud S_u)$ is predictable, integration-by-parts implies that $(1 - K) \Exp(\int_0^\cdot \theta_u \ud S_u) \Exp(n \int_0^\cdot \theta_u \ud S_u)$ is also a (local) supermartingale. Write $1 - K = \Exp(- C)$, where $C$ is nondecreasing and predictable, with $\Delta C < 1$. Then,
\begin{align*} 
(1 - K) \Exp\left(\int_0^\cdot \theta_u \ud S_u\right) & \Exp\left(n \int_0^\cdot \theta_u \ud S_u\right) \\
&= \Exp(- C) \Exp\left(\int_0^\cdot \theta_u \ud S_u\right) \Exp\left(n \int_0^\cdot \theta_u \ud S_u\right) \\
&= \Exp\left(- C + (n+1) \int_0^\cdot \theta_u \ud S_u + n \int_0^\cdot \theta^2_u \ud [S, S ]_u \right)
\end{align*}
holds in view of Yor's formula, where we have used the fact that $[C, S] = 0$ holds. It follows that $- C + n \int_0^\cdot \theta^2_u \ud [S, S ]_u$ has to be a non-increasing process. Since this has to hold for all $\nin$, we obtain $\int_0^\cdot \theta^2_u \ud [S, S ]_u = 0$, which is the same as $\int_0^\cdot \theta_u \ud S_u = 0$.
\end{proof}

\subsection{Ramifications}
			
As mentioned after Theorem \ref{T:1FTAP}, a special $\filt{G}$-local martingale deflator is the one corresponding to the reciprocal of the $\filt{G}$-num\'eraire in  $\X^{\filt{G}}$. It is natural to ask whether the $\filt{F}$-optional projection of the reciprocal of the $\filt{G}$-num\'eraire is the reciprocal of the $\filt{F}$-num\'eraire. The following example shows that this is not necessarily the case, even if the reciprocal of the $\filt{G}$-num\'eraire is a $\filt{G}$-martingale. For a positive result in this direction, under additional assumptions, we refer to Proposition~\ref{P:180107.1} later on.

\begin{example} \label{Ex:180214.1}
Assume that the underlying probability space supports a standard Brownian motion $W$ and an independent  Bernoulli random variable $\Theta$ with $\Pu[\Theta = 1] = q = 1 - \Pu[\Theta = 0]$ for $q \in (0,1)$. The filtration $\filt{G}$ is given by $\sigalg{G}_t = \sigalg{F}_t^W \vee \sigma(\Theta)$ for all $t \geq 0$, while  $\filt{F} = \filt{F}^X = \filt{F}^S$, where
\begin{align*}
X \dfn  \Theta \int_0^\cdot \ud u  +  \int_0^\cdot (\indic_{u < 1} + \Theta \indic_{u \geq 1}) \ud W_u; \qquad 
S \dfn \Exp (X).
\end{align*}
Define the wealth process $\widehat{V}  \in \X^{\filt{G}}$ by
\begin{align*}
\widehat{V}  &\dfn \Exp\left(\int_0^\cdot \frac{\Theta}{S_u} \ud S_u\right)  = \Exp\left(\Theta^2  \int_0^\cdot \ud u  +  \Theta \int_0^\cdot (\indic_{u < 1} + \Theta \indic_{u \geq 1}) \ud W_u\right) \\
&= \Exp\left(\Theta \int_0^\cdot \left( \ud u  +  \ud W_u \right) \right),
\end{align*}
where we have used the fact that $\Theta = \Theta^2$ since $\Theta$ is $\{0,1\}$-valued. It is straightforward to check that $Y \dfn 1 / \widehat{V} = \Exp(- \Theta W)$ is a $\filt{G}$-local martingale deflator, which is obviously a $\filt{G}$-martingale. Moreover, $\Theta$ is $\sigalg{F}_1$-measurable, yielding $\sigalg{F}_t = \sigalg{G}_t$ for all $t \geq 1$ and
	\begin{align*}
		{}^o Y_t  = \E\left[\left.Y_t\right| \sigalg{F}_t\right] =
\indic_{\{\Theta = 0\}} + \e^{-X_t + t/2} \indic_{\{\Theta = 1\}},
\qquad t \geq 1.
	\end{align*}
	Straightforward computations give
	\[
	\Pu [\Theta = 0 \such  \sigalg{F}_t] = \frac{1-q}{1 - q + q \exp(X_t - t/2)},	\quad 0 \leq t < 1,
	\]
	implying that
	\[
	{}^o Y_t = \E\left[\left.\indic_{\{\Theta = 0\}} \right| \sigalg{F}_t\right] +  \e^{-X_t + t/2} \E\left[\left.\indic_{\{\Theta = 1\}} \right| \sigalg{F}_t\right]	= \frac{1}{1 - q + q \exp(X_t - t/2)},
	\]
	for $0 \leq t < 1$. Hence, ${}^o Y$ has a jump at time $t=1$.  Since the $\filt{F}$-num\'eraire has continuous paths, its reciprocal clearly cannot equal ${}^o Y$.
\end{example}

We now provide a result concerning the dynamics of $S$ in the smaller $\filt{F}$-filtration. To make headway, note that Theorem~\ref{T:1FTAP} yields some $\filt{G}$-predictable process $G$ and some $\filt{G}$-local martingale $M$ such that
\begin{align} \label{eq:180603.1}
	S = S_0 + \int_0^\cdot G_u \ud [M, M]_u + M.
\end{align}
\begin{proposition}
Assume that the $\filt{F}$-optional projection of $|G|$ is $(\Pu \times [S, S])$-a.e.~finite, i.e.
\begin{align} \label{eq:180603.2}
		\E \left[ \int_0^\infty \indic_{ \{{}^o |G|_u = \infty \}} \ud [S, S]_u \right] = 0.
\end{align}
Then, the $\filt{F}$-predictable projection $F$ of $G$ exists and satisfies $ \int_0^\cdot F_u^2 \ud [S, S]_u   < \infty$. Moreover,
\begin{align} \label{eq:180603.3}
	S = S_0 + \int_0^\cdot F_u \ud [S, S]_u + N,
\end{align}
where $N$ is an $\filt{F}$-local martingale.
\end{proposition}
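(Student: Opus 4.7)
The plan is to combine Theorem~\ref{T:1FTAP} with Remark~\ref{R:180525.1} to produce an $\filt{F}$-decomposition of $S$, and then to identify its drift density with the $\filt{F}$-predictable projection of $G$. Since \eqref{eq:180603.1} verifies the structural condition \eqref{T:1FTAPiv} under $\filt{G}$, the market is $\filt{G}$-viable; Remark~\ref{R:180525.1} yields $\filt{F}$-viability, and a second invocation of Theorem~\ref{T:1FTAP} produces an $\filt{F}$-predictable process $H$ satisfying $\int_0^\cdot H_u^2 \, \ud [S,S]_u < \infty$ and $S = S_0 + \int_0^\cdot H_u \, \ud [S,S]_u + \widetilde N$ for some $\filt{F}$-local martingale $\widetilde N$. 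Everything in the statement then reduces to showing $H = F$ up to $\Pu \times \ud [S,S]$-null sets, where $F := {}^p G$ denotes the $\filt{F}$-predictable projection of $G$; the bound $\int_0^\cdot F_u^2 \, \ud [S,S]_u < \infty$ would follow immediately, and \eqref{eq:180603.3} would result by substituting $F$ for $H$ and $N$ for $\widetilde N$.

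For the identification, I would introduce the $\filt{F}$-stopping times $\tau_n := \inf\set{t \geq 0 : [S,S]_t + \int_0^t H_u^2 \, \ud [S,S]_u \geq n}$, which increase to $\infty$. Since the drifts in \eqref{eq:180603.1} and in the $\filt{F}$-decomposition are both continuous and of finite variation, $[M, M] = [S, S] = [\widetilde N, \widetilde N]$; consequently $M^{\tau_n}$ and $\widetilde N^{\tau_n}$ are $L^2$-bounded martingales in $\filt{G}$ and $\filt{F}$, respectively, so for every bounded $\filt{F}$-predictable $\phi$ the stochastic integrals $\int_0^{\tau_n} \phi_u \, \ud M_u$ and $\int_0^{\tau_n} \phi_u \, \ud \widetilde N_u$ are $L^2$-bounded martingales of vanishing expectation. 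Evaluating $\E[\int_0^{\tau_n} \phi_u \, \ud S_u]$ via the $\filt{G}$- and $\filt{F}$-decompositions and equating the two expressions gives
\[
\E \bra{\int_0^{\tau_n} \phi_u G_u \, \ud [S,S]_u} \; = \; \E \bra{\int_0^{\tau_n} \phi_u H_u \, \ud [S,S]_u},
\]
both expectations being finite. The standard projection identity for the $\filt{F}$-predictable increasing process $[S,S]$ rewrites the left-hand side as $\E[\int_0^{\tau_n} \phi_u F_u \, \ud [S,S]_u]$; hypothesis \eqref{eq:180603.2}, via the duality $\E[\int \psi \, {}^p |G| \, \ud [S,S]] = \E[\int \psi \, {}^o |G| \, \ud [S,S]]$ for nonnegative $\filt{F}$-predictable $\psi$, ensures that ${}^p G^+$ and ${}^p G^-$ are $\Pu \times \ud [S,S]$-a.e.~finite, so $F = {}^p G^+ - {}^p G^-$ is unambiguously defined. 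The arbitrariness of $\phi$ would then force $F = H$ $\Pu \times \ud [S,S]$-a.e.~on $\dbracc{0, \tau_n}$, and sending $n \to \infty$ would complete the identification.

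The principal obstacle is the simultaneous reduction of the $\filt{G}$-local martingale $M$ and the $\filt{F}$-local martingale $\widetilde N$ by a single $\filt{F}$-stopping time, since $\filt{F}$-stopping times are generally too poor to reduce a $\filt{G}$-local martingale (and conversely $\filt{G}$-stopping times would destroy the $\filt{F}$-adaptedness needed for the test-function argument). I expect this to be resolved by the elementary observation $[S,S] = [M,M] = [\widetilde N, \widetilde N]$: the $\filt{F}$-stopping time $\tau_n$ that bounds $[S,S]$ automatically bounds the quadratic variations of both $M$ and $\widetilde N$, reducing each to an $L^2$-bounded martingale in its respective filtration. Once this coupling is in place, only the standard predictable-projection formula against the $\filt{F}$-predictable process $[S,S]$ and the hypothesis \eqref{eq:180603.2} remain.
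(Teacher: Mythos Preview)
Your approach coincides with the paper's: both localise with $\filt{F}$-stopping times that bound $[S,S]$ (exploiting $[M,M]=[S,S]=[\widetilde N,\widetilde N]$ so that a single $\filt{F}$-stopping time simultaneously reduces the $\filt{G}$-local martingale $M$ and the $\filt{F}$-local martingale $\widetilde N$), and both identify the $\filt{F}$-drift density of $S$ with the $\filt{F}$-predictable projection of $G$. The paper also bounds $S$ and then invokes Meyer's dual-projection formula directly, whereas you reverse the order---first extracting $H$ from $\filt{F}$-viability via Theorem~\ref{T:1FTAP}, then matching $H$ with $F$ by testing against bounded $\filt{F}$-predictable $\phi$---but the content is the same, and your explicit treatment of the ``principal obstacle'' is exactly the localisation the paper performs in its first sentence.

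One point does need attention. The identity
\[
\E\Big[\int_0^{\tau_n}\phi_u\,G_u\,\ud[S,S]_u\Big]
=\E\Big[\int_0^{\tau_n}\phi_u\,{}^pG_u\,\ud[S,S]_u\Big]
\]
requires $\E\big[\int_0^{\tau_n}|\phi_u G_u|\,\ud[S,S]_u\big]<\infty$, and hypothesis~\eqref{eq:180603.2} gives only $(\Pu\times\ud[S,S])$-a.e.\ finiteness of ${}^o|G|$, not integrability. Your argument does establish that $\int_0^{\tau_n}\phi_u G_u\,\ud[S,S]_u$ is an integrable \emph{random variable} (as the difference of $L^2$-bounded martingale increments and a bounded drift term), but not that the \emph{variation} $\int_0^{\tau_n}|\phi_u G_u|\,\ud[S,S]_u$ has finite expectation; without the latter, projecting $G^+$ and $G^-$ separately may lead to $\infty-\infty$, so the passage from $G$ to ${}^pG$ is not the ``standard'' step you invoke. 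The paper faces precisely the same obstacle and resolves it by citing ``an appropriate modification'' of Meyer's Th\'eor\`eme~$1'$, which is exactly the device that turns \eqref{eq:180603.2} into the required dual-projection statement; your write-up should either invoke that result as well or supply the missing integrability argument explicitly.
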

\begin{proof}
	Without loss of generality, and upon using $\filt{F}$-localisation, we may assume that the $\filt{F}$-adapted processes $S$ and $[M, M] = [S,S]$ are uniformly bounded, hence $M$ is a $\filt{G}$-martingale.  An appropriate modification of Meyer \cite[Theoreme~$1'$]{Meyer:1973} yields that the dual optional $\filt{F}$-projection of $\int_0^\cdot G_u \ud [S, S]_u$ equals $ \int_0^\cdot F_u \ud [S,S]_u$. The fact that $ \int_0^\cdot F_u^2 \ud [S, S]_u   < \infty$ follows from Theorem \ref{T:1FTAP}, given that $\filt{G}$-viability implies $\filt{F}$-viability from \eqref{eq:180525.2}.  
\end{proof}

As the next example illustrates, although \eqref{eq:180603.3} always holds for some $\filt{F}$-predictable process $F$, the predictable $\filt{F}$-projection of $G$ does not need exist in general. (We are grateful to  Walter Schachermayer for proposing the idea for this example.)
\begin{example} \label{Ex:200826}
Let  $\Omega = \mathbb{N} \times C([0, \infty); \Real)$. Define $\Theta(\theta, w) = \theta$ and $W_t(\theta, w) = w_t$ for all $(\theta,w) \in \Omega$ and $t \in [0, \infty)$. Let $\filt{G}$ denote the smallest right-continuous filtration making $\Theta$ a $\sigalg{G}_0$-measurable random variable and $W$ adapted. Consider any probability measure $\mu$ on $2^\Natural$ with $\sum_{\theta \in \Natural}\theta \mu [ \{ \theta \}] = \infty$, and let $\Pu$ denote the product probability on $\sigalg{G}_\infty$ of $\mu$ and Wiener measure.  Note that $\E[\Theta] = \infty$ and that $\Theta$ and $W$ are independent under $\Pu$.

Theorem~1 in Prokaj and Schachermayer \cite{Prokaj:Schachermayer:2011} and a simple conditioning argument yield the existence of a $\filt{G}$-predictable process $H$ taking values in $\{-1, 1\}$, such that the process
\[
X \dfn \int_0^\cdot H_u \Theta \ud u + \int_0^\cdot H_u \ud W_u
\]
is an $\filt{F}$-Brownian motion, where $\filt{F} \dfn \filt{F}^X$. Note also that $\Theta$ and $X$ are independent under $\Pu$, again by a conditioning argument (indeed, the law of $X$ 
conditioned on $\Theta$ coincides with its unconditional one, namely the 
standard Wiener measure). Thus, for $u \geq 0$,
\begin{align}
\E\left[ |H_u \Theta|   \such \sigalg{F}_u\right] = \lim_{n \uparrow \infty} \E\left[ |H_u \Theta| \wedge n  \such \sigalg{F}_u\right] &= \lim_{n \uparrow \infty} \E\left[  \Theta \wedge n  \such \sigalg{F}_u\right] \nonumber\\
	&=  \lim_{n \uparrow \infty} \E\left[ \Theta \wedge n\right] = \E[\Theta] = \infty. \label{eq:200826.2}
\end{align}
Defining now $S \dfn \Exp(X)$, \eqref{eq:180603.1} holds with $M \dfn \int_0^\cdot S_u H_u \ud W$ and $G \dfn  H \Theta / S$. In particular, $\int_0^\cdot  G_u^2 \ud u = \Theta^2 \int_0^\cdot S_u^{-2} \ud u < \infty$ holds. Moreover it follows from \eqref{eq:200826.2} that \eqref{eq:180603.2} fails. Nevertheless, \eqref{eq:180603.3} holds with $F = 0$ and $N = \Exp(X)$; here, $F$ is not the predictable $\filt{F}$-projection of $G$, as the latter does not exist.
\end{example}

\begin{remark} \label{R:180526}  
If there exist finitely many $\filt{F}$-optional processes $(\Phi^i)_{i = 1, \cdots, I}$ for some $I \in \mathbb{N}$ such that $\sign(G) = \sign(\Phi^i)$, $[S, S]$-a.e., for some $i = 1, \cdots, I$, $\Pu$-a.e.~then   \eqref{eq:180603.2}  holds. To see this, note that by localisation we may assume uniform boundedness, say by a constant $\kappa > 0$, of the $\filt{F}$-measurable process $[S, S]$, as well as of all processes
\[
	\widetilde{S}^i  \dfn S_0 + \int_0^\cdot \sign(\Phi^i_u) \ud S_u = S_0 + \int_0^\cdot  \sign(\Phi^i_u) G_u \ud [M, M]_u + \int_0^\cdot \sign(\Phi^i_u)  \ud  M_u 
\]
for $i = 1, \cdots, I$. It now suffices to observe that
\begin{align*}
	\E\left[\int_0^\cdot |G_u| \ud [M, M]_u  \right]
		 &\leq \sum _{i = 1}^I \E\left[\indic_{\{\sign(G) = \sign(\Phi^i)\}} \left(\widetilde{S}^i - S_0  - \int_0^\cdot \sign(\Phi^i_u)  \ud  M_u \right)\right]\\
		 &\leq 2 I \kappa  +  \sum _{i = 1}^I \E\left[\left|\int_0^\cdot \sign(\Phi^i_u)  \ud  M_u \right|\right] < \infty;
\end{align*}
here the last inequality uses the fact that the stochastic integrals have bounded quadratic variation.
This then yields  \eqref{eq:180603.2}.

For example, assume that $\filt{G}$ supports a  Brownian motion $W$ and a $\sigalg{G}_0$-measurable $\Real$-valued random variable $\Theta$. Let $H$ denote any $\filt{G}$-predictable process such that $\int_0^\cdot H_u^2 \ud u < \infty$ and set
\[
	S \dfn \Exp\left(\Theta \int_0^\cdot H_u \ud u + W\right).
\]
Consider now a right-continuous filtration  $\filt{F}$ with  $\filt{F} \subseteq\filt{G}$ such that $S$ and $\sign(H)$ are $\filt{F}$-adapted. With $I = 2$, $\Psi^1 = \sign(H)$, and $\Psi^2 = -\sign(H)$,  the $\filt{F}$-optional projection of $|G|$, where $G \dfn H/S$, is $(\Pu \times [S,S])$-a.e.~finite, i.e., \eqref{eq:180603.2} holds. 
\end{remark}

\section{A Bayesian Framework}   \label{sec:Bayes}

\subsection{Set-up}
Consider some parameter space $\mathfrak{R}$, equipped with sigma algebra $\R$ and probability measure $\mu$, which will be the ``prior'' law of a parameter. Let $\Omega = \mathfrak{R} \times C([0, \infty); \Real)$. Define $\Theta(\theta, x) = \theta$ and $X_t(\theta, x) = x_t$ for all $(\theta,x) \in \Omega$ and $t \in [0, \infty)$. Define $\filt{F}$ as the smallest right-continuous filtration making $X$ adapted; i.e., $\filt{F} = \filt{F}^X$. Moreover, let $\filt{G}$ be the smallest right-continuous filtration containing $\filt{F}$ and further making $\Theta$ a $\sigalg{G}_0$-measurable random variable.  Next, define $\Qu$ as the product probability measure on $\sigalg{G}_\infty$ of $\mu$ and Wiener measure; under $\Qu$, $X$ is a $\filt{G}$-Brownian motion independent of $\Theta$, the latter random variable having law $\mu$. Also, under $\Qu$, $X$ is an $\filt{F}$-Brownian motion. Let $\Wu = \Qu |_{\sigalg{F}_\infty}$ denote Wiener measure.

Consider a functional $G : \Omega \times [0, \infty) \mapsto [- \infty, \infty]$, assumed to be $\filt{G}$-optional, which will serve as the drift functional for the stock returns in the filtration $\filt{G}$. We allow $G$ to take the values $\pm \infty$, although such values will not be ``seen'' by the solutions of the martingale problems we consider later. Define also $A : \Omega \times [0, \infty) \mapsto [0, \infty]$ via $A(\theta, x, \cdot) \dfn \int_0^\cdot G(\theta, x, u)^2 \ud u$; note that $A$ is non-decreasing in the time component.

For $\mu$-a.e.~$\theta \in \mathfrak{R}$, we assume the existence of a probability $\Pu^\theta$ on $\sigalg{F}_\infty$ such that $\Pu^\theta \ll_{\sigalg{F}_t} \Wu$ for all $t \geq 0$, $\int_0^\cdot |G(\theta, X, u)| \ud u$ is $\Pu^\theta$-a.e.~finitely valued and  $X - \int_0^\cdot G(\theta, X, u) \ud u$ is an $\filt{F}$-local $\Pu^\theta$-martingale.

Some remarks are in order. First of all, under the previous assumptions, the process $W^\theta \dfn X - \int_0^\cdot G(\theta, X, u) \ud u$ is actually an $(\filt{F}, \Pu^\theta)$-Brownian motion, as follows from L\'evy's characterisation theorem. Secondly, defining the set-valued process $\Sigma : \Omega \times [0, \infty) \mapsto \mathcal{R}$ via
\[
\Sigma_t \dfn \set{\theta \in \mathfrak{R} \such A(\theta, X, t) = \infty} \in \sigalg{F}_{t-}, \quad t \geq 0,
\]
Girsanov's theorem implies that
\begin{align*}
\zeta^\theta_t &\dfn \frac{\ud \Pu^\theta}{\ud \Wu}\Big|_{\sigalg{F}_t} \\
&= \exp \pare{\int_0^t G(\theta, X, u) \ud X_u - \frac{1}{2} A(\theta, X, t)} \indic_{\{\theta \notin \Sigma_t\}}, \quad (\theta,t) \in \mathfrak{R} \times [0, \infty),
\end{align*}
which, in particular, implies that $\Pu^\theta$ is necessarily unique. 
Thanks to Stricker and Yor \cite[Proposition~5]{Stricker:Yor} applied under $\Qu$ and a $\filt{G}$-localisation argument,  the mapping $\zeta : \mathfrak{R} \times C([0, \infty); \Real) \times [0, \infty) \mapsto [- \infty, \infty]$ may be chosen to be jointly measurable by taking an appropriate version. Finally, since $\Wu \bra{\theta \in \Sigma_t, \, \zeta^\theta_t > 0} = 0$, it follows that $\Pu^\theta \bra{\theta \in \Sigma_t} = 0$ holds for all $(\theta,t) \in \mathfrak{R} \times [0, \infty)$, even though  $\Qu \bra{\theta \in \Sigma_t} > 0$ is possible.

Define now $\Pu$ on $\sigalg{G}_\infty$ via $\Pu \bra{\ud \theta, \ud x} \dfn \mu \bra{\ud \theta} \Pu^\theta \bra{\ud x}$, and note that $W \dfn X - \int_0^\cdot G(\Theta, X, u) \ud u$ is a standard $(\filt{G}, \Pu)$-Brownian motion; in particular, $W$ and $\Theta$ are independent under $\Pu$.  Indeed, as in Example~\ref{Ex:200826}, this follows from the fact 
that the conditional law of $W$ 
given 
$\Theta$
coincides with its unconditional law.

In order to connect with the financial setting of the previous sections, one may define the asset price $S$ to equal to $X$ or, if one insists on positive asset prices, one may set $S = \Exp (X)$. Choosing one or the other is plainly a matter of interpretation, and will not affect the mathematical content of the discussion here. The fact that $\Pu^\theta \bra{\theta \in \Sigma_t} = 0$ for all $t \geq 0$, equivalent to finiteness of the process $A(\theta, x, \cdot) = \int_0^\cdot G(\theta, x, u)^2 \ud u$, implies by Theorem \ref{T:1FTAP} the $\filt{F}$-viability of the $\Pu^\theta$-model for all $\theta \in \mathfrak{R}$.

We are interested in the dynamics of $X$ on $\filt{F}$ under $\Pu$. For this, we  make one final assumption (recall also the discussion in Remark~\ref{R:180526}), namely
\begin{equation} \label{eq:180212}
\int_{\mathfrak{R}} 
|G(\theta, X, \cdot)| \zeta^\theta_\cdot \mu \bra{\ud \theta} < \infty, \quad (\Pu \times [X,X])\text{-a.e.}
\end{equation}
Under all the previous assumptions, Bayes' formula yields for $t \geq 0$ that
\[
\E_\Pu \bra{G(\Theta, X, t) \such \sigalg{F}_t} = \frac{\int_{\mathfrak{R}} G(\theta, X, t) \zeta^\theta_t \mu \bra{\ud \theta}}{\zeta_t}, \quad \text{where} \quad \zeta_t \dfn \int_{\mathfrak{R}} \zeta^\theta_t \mu \bra{\ud \theta}
\]
is an $(\filt{F}, \Qu)$-Brownian martingale. 
In fact, upon defining the random measure-valued process $(\mu_t)_{t \geq 0}$ via
\[
\frac{\mu_t [\ud \theta]}{\mu[\ud \theta]} = \frac{\zeta^\theta_t}{\zeta_t} \equiv \frac{\zeta(\theta, X, t)}{\int_{\mathfrak{R}} \zeta(\eta, X, t) \mu \bra{\ud \eta}},
\]
it follows that $\E_\Pu \bra{G(\Theta, X, t) \such \sigalg{F}_t} = \int_{\mathfrak{R}} G(\theta, X, t) \mu_t \bra{\ud \theta}$.
Therefore, defining the functional $F: C([0, \infty); \Real) \times [0, \infty) \rightarrow (- \infty, \infty]$ via
\[
F(x, t) \dfn \frac{\int_{\mathfrak{R}} G(\theta, x, t) \zeta(\theta, x, t) \mu \bra{\ud \theta}}{\int_{{\mathfrak{R}}} \zeta(\theta, x, t) \mu \bra{\ud \theta}},  \text{ if } \int_{\mathfrak{R}} \abs{G(\theta, x, t)} \zeta(\theta, x, t) \mu \bra{\ud \theta} < \infty,
\]
and $F(x, t) \dfn \infty$ otherwise, it follows that $W^{\filt{F}} \dfn X - \int_0^\cdot F(X, u) \ud u$ is $(\filt{F}, \Pu)$-Brownian motion. 

The  $\filt{G}$-local martingale deflators for $S = X$ (or $S = \Exp(X)$) are of the form
\begin{align*}
Y &= h(\Theta) \exp \pare{ - \int_0^\cdot G(\Theta, X, u) \ud W_u - \frac{1}{2}  \int_0^\cdot G(\Theta, X, u)^2 \ud u} \\
&= h(\Theta) \exp \pare{ - \int_0^\cdot G(\Theta, X, u) \ud X_u + \frac{1}{2}  \int_0^\cdot G(\Theta, X, u)^2 \ud u} = h(\Theta) \frac{1}{\zeta^\Theta},
\end{align*}
where $h: {\mathfrak{R}} \rightarrow (0,\infty)$ is any strictly positive Borel function with the property $\int_{\mathfrak{R}} h(\theta) \mu[\ud \theta] = 1$. Note that since $\Pu \bra{\Theta \in \Sigma_t} = 0$, we have $\Pu \bra{\zeta^\Theta_t > 0} = 1$ for all $t \geq 0$. The optional projection of any such $Y$ on $\filt{F}$ satisfies, by Bayes' rule,
\begin{align*}
{}^o Y_t &= \E_\Pu \bra{Y_t \such \sigalg{F}_t} = \frac{\int_{\mathfrak{R}} \pare{h(\theta) / \zeta^\theta_t} \zeta^\theta_t \indic_{\{\zeta^\theta_t > 0\}} \mu [\ud \theta] }{\zeta_t} = \frac{\int_{\mathfrak{R}} h(\theta) \indic_{\{\zeta^\theta_t > 0\}} \mu [\ud \theta] }{\zeta_t} \\
&= (1 - K_t^h) \frac{1}{\zeta_t}, \quad t \geq 0,
\end{align*}
where
\[
K^h_t \dfn \int_{\Sigma_t} h(\theta) \mu [\ud \theta]\qquad t \geq 0.
\]
Note that $K^h$ is a nondecreasing, $\filt{F}$-predictable process. In particular, there is ``loss of mass'' exactly when certain models become impossible. If the conditional law of $\Theta$ under $\Pu$ given $\sigalg{F}_t$ maintains the same support as $\Theta$ has for all $t \geq 0$, it follows that $K^h = 0$. 
By Theorem~\ref{thm:1}, $1/\zeta$ is an $\filt{F}$-local martingale deflator.  This uses the fact that  $1/\zeta$ is indeed an $(\filt{F}, \Pu)$-local martingale since $\zeta$ is an $(\filt{F}, \Qu)$-Brownian martingale, hence continuous, not jumping to zero.

\begin{lemma}  \label{L:190824}
It holds that $\set{\zeta = 0} = \set{\mu[\Sigma] = 1}$. In particular, for any  $\sigalg{F}_t$-measurable nonnegative $\xi$ and $t \geq 0$, 
it holds that
\[
\E_\Pu \bra{\frac{1}{\zeta_t} \xi} = \E_\Wu \bra{\xi \indic_{\{\mu[\Sigma_t] < 1\}}}.
\]
\end{lemma}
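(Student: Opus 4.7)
The plan is to first establish the set identity and then use it together with the Radon--Nikodym interpretation of $\zeta_t$ to deduce the integral formula.

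For the first claim, I would start from the explicit formula
\[
\zeta^\theta_t = \exp\!\pare{\int_0^t G(\theta,X,u)\,\ud X_u - \tfrac{1}{2} A(\theta,X,t)} \indic_{\{\theta \notin \Sigma_t\}}.
\]
Because the exponential is always strictly positive on $\{\theta \notin \Sigma_t\}$, the pointwise equivalence $\zeta^\theta_t > 0 \iff \theta \notin \Sigma_t$ holds (for the chosen jointly measurable version). Since $\zeta_t = \int_\mathfrak{R} \zeta^\theta_t \,\mu[\ud\theta]$ is a nonnegative integral of a nonnegative function, $\zeta_t = 0$ if and only if $\zeta^\theta_t = 0$ for $\mu$-a.e.\ $\theta$, which amounts to $\mu[\Sigma_t] = 1$. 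This yields $\{\zeta = 0\} = \{\mu[\Sigma] = 1\}$.

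For the second claim, the key step is to identify $\zeta_t$ as the density of $\Pu|_{\sigalg{F}_t}$ with respect to $\Wu|_{\sigalg{F}_t}$. Writing $\Pu[\ud\theta,\ud x] = \mu[\ud\theta]\Pu^\theta[\ud x]$ and using $\ud\Pu^\theta/\ud\Wu|_{\sigalg{F}_t} = \zeta^\theta_t$, Fubini's theorem gives, for any $B \in \sigalg{F}_t$,
\[
\Pu[B] = \int_\mathfrak{R} \E_\Wu\bra{\zeta^\theta_t \indic_B}\mu[\ud\theta] = \E_\Wu\bra{\zeta_t \indic_B},
\]
so $\zeta_t$ is indeed that density. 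In particular $\Pu[\zeta_t = 0] = \E_\Wu[\zeta_t \indic_{\{\zeta_t = 0\}}] = 0$, so $1/\zeta_t$ is $\Pu$-a.s.\ well defined.

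Finally, for $\xi \in L^0_+(\sigalg{F}_t)$, I would compute
\[
\E_\Pu\!\bra{\frac{\xi}{\zeta_t}} = \E_\Pu\!\bra{\frac{\xi}{\zeta_t}\indic_{\{\zeta_t > 0\}}} = \E_\Wu\!\bra{\zeta_t \cdot \frac{\xi}{\zeta_t}\indic_{\{\zeta_t > 0\}}} = \E_\Wu\!\bra{\xi \indic_{\{\zeta_t > 0\}}},
\]
and then substitute the set identity $\{\zeta_t > 0\} = \{\mu[\Sigma_t] < 1\}$ from the first part to conclude. The only genuinely delicate point I foresee is making sure the almost-sure identifications are done under the correct measures ($\Pu$ versus $\Wu$), which is why I insert the indicator $\indic_{\{\zeta_t > 0\}}$ before switching measures; beyond that the argument is a direct application of Fubini and the Radon--Nikodym theorem.
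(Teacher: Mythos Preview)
Your proof is correct and follows essentially the same approach as the paper's: establish the set identity from the positivity of the exponential term in $\zeta^\theta_t$, then use that $\zeta_t$ is the Radon--Nikodym density of $\Pu|_{\sigalg{F}_t}$ with respect to $\Wu|_{\sigalg{F}_t}$ to rewrite the $\Pu$-expectation as a $\Wu$-expectation with indicator $\indic_{\{\zeta_t>0\}}$. The paper's proof is more terse (it takes the density identification as given from the preceding discussion), whereas you spell out the Fubini argument explicitly, but the substance is the same.
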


\begin{proof}
Simply note that
\[
\set{\zeta = 0} = \set{ \int_{\mathfrak{R}} \zeta^\theta \mu [\ud \theta] = 0} = \set{ \mu [\Sigma] = 1}.
\]
Given that $\zeta$ is the density process of $\Pu$ with respect to $\Wu$ on $\filt{F}$, we have 
\[
\E_\Pu \bra{\frac{1}{\zeta_t} \xi} = \E_\Wu \bra{\xi \indic_{\{\zeta_t>  0\}}}, \qquad t \geq 0,
\]
which immediately gives the result.
\end{proof}

As a corollary of the above, we obtain that the ``default'' (in the terminology of Elworthy et al.~\cite{Elworthy_Li_Yor_99}) of the $(\filt{F}, \Pu)$-local martingale $1/\zeta$ equals
\[
1 - \E_\Pu \bra{\frac{1}{\zeta_t}} = 1 - \Wu \bra{ \mu[\Sigma_t] < 1 } = \Wu \bra{ \mu[\Sigma_t] = 1 }, \qquad t \geq 0.
\]
This is clear: $1/\zeta$ will be an $(\filt{F}, \Pu$)-martingale if and only if $\Pu$ and $\Wu$ are locally equivalent, which will happen exactly when, under $\Wu$,  the family of parameters that yield a strictly positive Radon–Nikodym derivative at time $t$ has strictly positive $\mu$-measure, for each $t \geq 0$.

\begin{remark} \label{R:190824}
The $(\filt{F}, \Pu)$-market is complete. Indeed, fix some $T \geq 0$ and some nonnegative $\sigalg{F}_T$-measurable random variable $D_T$ with $D_0 = \E_\Pu \bra{D_T/\zeta_T}  < \infty$.  We then also have $D_0 = \E_\Qu [D_T \indic_{\{\zeta_T > 0\}}] < \infty$ and the martingale representation theorem gives the existence of an $(\Qu, S)$-integrable $H$ such that $D_0 + \int_0^T H_u \ud S_u = D_T \indic_{\{\zeta_T > 0\}}$ holds $\Qu$-a.e. This also implies that  $D_0 + \int_0^T H_u \ud S_u = D_T$ holds $\Pu$-a.e.  
\end{remark}

\begin{example} \label{ex:3.3}
Let $\mu$ be an arbitrary law on $\mathfrak{R} \dfn \mathbb{R}$, and  set $G(\cdot, \cdot, \theta) \dfn \theta H(\cdot, \cdot)$ for all $\theta \in \Real$, where $H$ is $\filt{F}$-optional with $0 < \int_0^t H^2(u, x) \ud u < \infty$ for all $t > 0$ and $x \in  C([0, \infty); \Real)$. Then, 
\begin{align*}
	\zeta^\Theta &= \Exp\left(\Theta \int_0^\cdot H(u, X) \ud X_u \right) \\
		&=  \exp\left(\Theta \int_0^\cdot H(u, X) \ud X_u - \frac{\Theta^2}{2} \int_0^\cdot H^2(u, X)  \ud u\right)
\end{align*}
 holds; hence  $\Sigma = \emptyset$ in this case.
Moreover, for $t > 0$, we have
\[
\E_\Pu[|\Theta| |\sigalg{F}_t] = \frac{\int_{\mathbb{R}} |\theta| \exp(\theta \int_0^t H(u, X) \ud X_u   - \theta^2 /{2}  \int_0^t H^2(u, X)  \ud u) \mu(\ud \theta)} {\zeta_t} < \infty,
\]
where 
\[
	\zeta = \int_{\mathbb{R}} \exp\left(\theta \int_0^\cdot H(u, X) \ud X_u   - \frac{\theta^2}{2}  \int_0^\cdot H^2(u, X)  \ud u\right) \mu(\ud \theta).
\]
Hence, all the assumptions of the present section, including \eqref{eq:180212}, are satisfied and
\begin{align*}
	X = \Theta  \int_0^\cdot H(u, X) \ud u +  W =  \int_0^\cdot \E_\Pu[\Theta|\sigalg{F}_u] H(u, X) \ \ud u +  W^{\filt{F}}
\end{align*}
for some $(\filt{G}, \Pu)$-Brownian motion $W$ and some   $(\filt{F}, \Pu)$-Brownian motion $W^{\filt{F}}$. However heavy-tailed the law of $\Theta$ may be (and even if it does not have any moments), its generalised conditional expectations given $\sigalg{F}_\cdot$ exist. This example with $H = 1$ is discussed in Kailath \cite{Kailath:1971}; see also Remark~\ref{R:180526}.
\end{example}

\begin{example}  \label{Ex:190823}
Let $\mu$ be an arbitrary law on $\Real$, and $G(\theta, x, t) \dfn  - \indic_{{\theta x_t < 1}}  \theta  / (1- \theta x_t)$  whenever $\theta \in \Real$, $t \geq 0$ and $x \in C([0,\infty), \Real)$. The corresponding dynamics
\begin{equation} \label{eq:dynam_cond_never cross}
X =  - \int_0^\cdot \frac{\Theta}{1-\Theta X_u } \ud u +  W
\end{equation}
are that of Brownian motion conditioned that it never crosses the level $1 / \Theta$, with the case $\Theta = 0$ simply corresponding to Brownian motion. For future reference, note that
\begin{align}  \label{eq:180213.1}
	\lim_{t \uparrow \infty} X_t = -\infty, \quad \text{on }\{\Theta > 0\}, \quad \Pu\text{-a.e.},
\end{align}
and, correspondingly, $\lim_{t \uparrow \infty} X_t = \infty$ on $\{\Theta < 0\}$, $\Pu$-a.e.

In this example, one easily computes $\zeta^\theta = (1 - \theta X) \indic_{(\underline{\theta}, \, \overline{\theta}) } (\theta)$ for each $\theta \in \Real$, where we define $\underline{\theta} := 1 / \inf_{u \in [0, \cdot]} X_u < 0$ and $\overline{\theta} := 1 / \sup_{u \in [0, \cdot]} X_u > 0$. Clearly, $\Sigma = \Real \setminus (\underline{\theta}, \, \overline{\theta})$. We compute  
\[
\zeta_t =  \int_{(\underline{\theta}_t, \overline{\theta}_t)} \pare{1 - \theta X_t } \mu \bra{\ud \theta} = \mu \bra{(\underline{\theta}_t, \overline{\theta}_t)} - X_t \int_{(\underline{\theta}_t, \overline{\theta}_t)}  \theta \mu \bra{\ud \theta}, \qquad t \geq 0.
\]
Note also that
\begin{align*}
\int_0^\infty G(\theta, X, t) \zeta^\theta_t \mu \bra{\ud \theta} &= - \int_{(\underline{\theta}_t, \overline{\theta}_t)} \theta (1-\theta X_t )^{-1} \pare{1 - \theta X_t }  \mu \bra{\ud \theta} \\
&= - \int_{(\underline{\theta}_t, \overline{\theta}_t)}  \theta \mu \bra{\ud \theta}, \qquad t \geq 0.
\end{align*}
Defining
\[
\widehat{\Theta}_t = \frac{1}{\mu \bra{(\underline{\theta}_t, \overline{\theta}_t)}} \int_{(\underline{\theta}_t, \overline{\theta}_t)}  \theta \mu \bra{\ud \theta}, \qquad t > 0,
\]
observe that
\[
F(X, t) = \frac{- \int_{(\underline{\theta}_t, \overline{\theta}_t)}  \theta \mu \bra{\ud \theta}}{\mu \bra{(\underline{\theta}_t, \overline{\theta}_t)} - X_t \int_{(\underline{\theta}_t, \overline{\theta}_t)}  \theta \mu \bra{\ud \theta}} = - \frac{\widehat{\Theta}_t}{1-\widehat{\Theta}_t X_t }.
\]
It follows that that dynamics of $X$ under $\filt{F}$ are
\[
 X = - \int_0^\cdot \frac{\widehat{\Theta}_u}{1 - \widehat{\Theta}_u X_u } \ud u +  W^{\filt{F}},
\]
which are the same dynamics as \eqref{eq:dynam_cond_never cross} with $\Theta$ there replaced by the process $\widehat{\Theta}$.

Note that $K^1 = 1 - \mu[(\underline{\theta}, \overline{\theta})]$. 
In this example, $1/\zeta$ will be an actual $(\filt{F}, \Pu)$-martingale if and only if $\mu [(\underline{\theta}_t, \overline{\theta}_t)] > 0$ holds $\Wu$-a.e., for all $t \geq 0$, which is equivalent to saying that $\mu [(-\varepsilon, \varepsilon) ] > 0$ holds for all $\varepsilon > 0$.

Let us also note that the distribution of the overall maximum $X^*_\infty \dfn \max_{t \geq 0} X_t$ can be computed in this setup. To this end, fix $y > 0$ and recall from \eqref{eq:180213.1} that $\Pu^\theta[X^*_\infty > y] = 1$ if $\theta \leq 0$.	If $\theta > 0$ then
	\begin{align*}
	\Pu^\theta[X^*_\infty > y] &= \Pu^\theta\left[\min_{t \geq 0} \zeta^\theta_t < 1  - \theta {y}\right] = \Pu^\theta\left[\max_{t \geq 0} \frac{1}{\zeta^\theta_t} \geq \frac{1}{ 1  - \theta y} \right]  =  (1 - \theta y)_+.
	\end{align*}
	Here we used the facts that the $(\filt{F}, \Pu^\theta)$-local martingale ${1}/{\zeta^\theta}$ satisfies  ${1}/{\zeta^\theta_\infty} = 0$ for each $\theta > 0$.  
	Hence we get
	\begin{align} \label{eq:180214.1}
	\Pu[X^*_\infty > y] &= \mu\left[\left(-\infty, \frac{1}{y}\right)\right]  - y \int_0^{1/y}  \theta \mu \bra{\ud \theta}. 
	\end{align}
Similar computations  hold also for the overall minimum of $X$.	
\end{example}

\begin{remark}
	Explicit formulas for the quantities in Example~\ref{Ex:190823} may be obtained for nice laws $\mu$. For example, if $\mu [\ud \theta] =  \indic_{\theta > 0} \theta^{-3} \e^{-1/\theta}  \ud \theta$ for all $\theta \in \Real$ (inverse Gamma distribution), one obtains 
	\[
		\frac{1}{\mu\left[\left(0, 1 / x^*\right)\right]}  \int_0^{1/x^*} \theta \mu[\ud \theta] =   \frac{1}{\int_{x^*}^\infty  u \e^{-u} \ud u} \e^{-x^*} = \frac{1}{1+x^*}, \qquad x^* > 0.
	\]
	This then yields $\widehat \Theta = 1/(1+X^*)$, where $X^* \dfn \max_{u \in [0,\cdot]} X_u$ is the running maximum of $X$, hence
	$
	F(X, t) = - {1}/{(1 +  X^*_t - X_t)}
	$
	for all $t \geq 0$.
	We thus obtain
	\[
	 X = - \int_0^\cdot \frac{1}{1 +  X^*_u - X_u} \ud u+  W^{\filt{F}}
	\]
	for some $(\filt{F}, \Pu)$-Brownian motion $W^{\filt{F}}$.
	Furthermore, 
	\[
	\zeta = \mu \bra{\left(0, \frac{1}{X^*}\right)} - X \int_0^{1/X^*} \theta \mu \bra{\ud \theta} = (1 + X^* - X) \e^{- X^*},
	\]
	giving, in conjunction with \eqref{eq:180213.1}, that the limiting conditional law for $\Theta$ is
	\begin{align*}
	\mu_\infty [\ud \theta] &= \pare {\lim_{t \uparrow \infty} \frac{\zeta^\theta_t}{\zeta_t} } \mu [\ud \theta] = \theta \e^{X^*_\infty} \indic_{\{1/\theta > X^*_\infty\}} \mu [\ud \theta] \\
	&= \frac{1}{\theta^2}
	\e^{ - (1/\theta - X^*_\infty)} \indic_{\{1/\theta > X^*_\infty\}} \ud \theta;
	\end{align*}
	in other words, $1/\Theta - X^*_\infty$ given $\sigalg{F}_\infty$ has the standard exponential law under $\Pu$.
	Moreover, \eqref{eq:180214.1} yields that $X^*_\infty$ has also the standard exponential law under $\Pu$.  Hence $1/\Theta$ is the sum of the two independent standard exponentially distributed random variables $1/\Theta - X^*_\infty$ and $X^*_\infty$.  Note also that the overall maximum $X^*_\infty$ of $X$ has the same distribution as the overall maximum of Brownian motion with drift rate $-{1}/{2}$; see, for example, Karatzas and Shreve \cite[Exercise~3.5.9]{KS1}.
\end{remark}

\begin{remark}
	Fix $t > 0$ and an $\sigalg{F}_t$-measurable nonnegative random variable $\xi$, representing the payoff of a contingent claim.
	As already observed in Remark~\ref{R:190824}, the   $(\filt{F}, \Pu)$-market is complete. Indeed, the price $p$ of $\xi$  in  the   $(\filt{F}, \Pu)$-market
	equals
	\[
		p = \E_{\Pu} \left[\frac{1}{\zeta_t} \xi\right] =   \E_\Wu \bra{\xi \indic_{\{\mu[\Sigma_t] < 1\}}}
	\]
	by Lemma~\ref{L:190824}. Similarly, in the $(\filt{G}, \Pu)$-market, one has the $\sigalg{G}_0$-measurable price $p^\Theta$, where
	\[
	p^\theta \dfn \E_\Wu \big[ \xi \indic_{\{ \zeta^\theta_t > 0\}} \big] = \E_\Wu \bra{\xi \indic_{\{\theta \notin \Sigma_t\}}}, \qquad \theta \in \mathfrak{R}.
	\]
 	It is clear, both by economic and by mathematical reasoning, that $p^\Theta \leq p$, $\Pu$-a.e. 
	
	Let us now consider the question how $p$ and $\Pu\rm{-}\ess \sup p^\Theta$ relate (that is, how does the hedging cost of an ``uninformed'' agent relate to the worst-case hedging cost of an ``informed'' agent) in the context of Example~\ref{Ex:190823}. Using the fact that $\Sigma = \Real \setminus (\underline{\theta}, \, \overline{\theta})$, we have
	\[
	p = \E_\Wu \bra{\xi \indic_{\{ \mu[(\underline{\theta}_t, \, \overline{\theta}_t)\}] > 0}}; \qquad p^\theta = \E_\Wu \bra{\xi \indic_{\{\underline{\theta}_t < \theta < \overline{\theta}_t\}}}, \quad \theta \in \Real.	
	\]
	First, note that in the three cases $\mu[[0,\infty)] = 1$, $\mu[(-\infty, 0]] = 1$, or $\mu [(-\varepsilon, \varepsilon) ] > 0$  for all $\varepsilon > 0$, we have $p = \Pu\rm{-}\ess \sup p^\Theta$.  In words, in these three cases the worst-case hedging cost of the informed agent equals the hedging cost of the uniformed agent.   Indeed, if  $\mu[[0,\infty)] = 1$ then 
	\[
	\Pu\rm{-}\ess\sup p^\Theta =  \E_\Wu \bra{\xi \indic_{\{  (\Pu\rm{-}\ess\inf \Theta)  < \overline{\theta}_t\}}} =  \E_\Wu \bra{\xi \indic_{\{\mu[[0, \overline{\theta}_t)] > 0\}}} = p,
	\]
	the case $\mu[(-\infty, 0])] = 1$ is symmetric, and if $\mu [(-\varepsilon, \varepsilon) ] > 0$  for all $\varepsilon > 0$ holds then $\Pu\rm{-}\ess\sup p^\Theta =  \E_\Wu \bra{\xi} = p$.
	
	Consider now the complementary case where there exist $\varepsilon_1 > 0$, $\varepsilon_2 > 0$ with
	$\mu[(-\varepsilon_1, \varepsilon_2)] = 0$ and  $\mu[(-\varepsilon_1-\varepsilon, -\varepsilon_1]] > 0$ and $\mu[[\varepsilon_2, \varepsilon_2 + \varepsilon)] > 0$ for all $\varepsilon > 0$.
	For the unit claim $\xi \equiv 1$ we then have
	\[
	p = \Wu \bra{\underline{\theta}_t \leq - \varepsilon_1 \text{ or } \overline{\theta}_t \geq \varepsilon_2} >  \Wu \bra{\, \underline{\theta}_t \leq - \varepsilon_1 } \vee \Wu \bra{\, \overline{\theta}_t \geq \varepsilon_2} = \Pu\rm{-}\ess\sup p^\Theta.
	\]
	Therefore, even the worst-case hedging cost of the informed agent is strictly smaller than the uninformed agent's hedging cost. Note that, in all cases, the replication strategy for the informed agent starting from $p^\Theta$ depends on $\Theta$. In the case $\Pu\rm{-}\ess \sup p^\Theta < p$, the superreplication strategy of the informed agent starting from deterministic amount $\Pu\rm{-} \ess \sup p^\Theta$ also depends on $\Theta$; however, when $\Pu\rm{-}\ess \sup p^\Theta = p$, no knowledge of $\Theta$ is required in order to (super)replicate starting from $p$.
\end{remark}

\section{Under the Presence of a Dominating Measure} \label{sec:dominating}

We now consider a more general setup than in Sect.~\ref{sec:Bayes}.
We assume throughout this section the existence of a $\filt{G}$-local martingale deflator $Y$.
 Moreover, we make the following assumption.

\begin{ass}  \label{Ass00}
There exist a probability measure $\Qu$ and a $(\filt{G}, \Qu)$-martingale  $Z$  such that $(\mathrm d \Pu / \mathrm d \Qu)|_{\sigalg{G}_t}= Z_t$ for all $t \geq 0$ and  $Z = 1 / Y$, $\Pu$-a.e. \qed
\end{ass}
 We refer to F\"ollmer \cite{F1972} and Perkowski and Ruf \cite{Perkowski_Ruf_2014} for sufficient conditions for the existence of such a probability measure $\Qu$ and process $Z$. Note, in particular, that $\Pu \ll_{\mathcal{F}_t} \Qu$ holds for all $t \geq 0$.

 In the sequel, we shall need to consider optional projections under both probabilities $\Pu$ and $\Qu$; therefore, for the purposes of this section, we shall denote explicitly, via a superscript, the probability under which the projection is considered.
 
 Under Assumption~\ref{Ass00}, Bayes' rule yields
\begin{align}
{}^o Y^{\Pu}_t &= \E_\Pu \bra{\left.Y_t \right| \sigalg{F}_t} = 
\frac{\E_\Qu\left[\left.Y_t Z_t \indic_{\{Z_t>0\}} \right| \sigalg{F}_t\right] }
{\E_\Qu\left[\left. Z_t  \right| \sigalg{F}_t\right]} \nonumber \\
&= 
\frac{\Qu[Z_t>0 |  \sigalg{F}_t]}{{}^o Z_t^{\Qu} } = (1 - K_t) M_t \frac{1}{{}^o Z_t^{\Qu}}, \quad t \geq 0,  \label{eq:171228'}
\end{align}
where 
\begin{align}  \label{eq:180101.1}
	{}^o Z_t^{\Qu}= \E_\Qu\left[\left. Z_t  \right| \sigalg{F}_t\right], \qquad t \geq 0,
\end{align}
and $(1-K) M$ is the multiplicative Doob-Meyer decomposition (see, for example, \cite[Proposition~B.1]{Perkowski_Ruf_2014}) of the $(\filt{F}, \Qu)$-supermartingale $\Qu[Z_\cdot >0 \, |  \, \sigalg{F}_\cdot]$; i.e., $K$ is a nondecreasing  $\filt{F}$-predictable $[0,1]$-valued process with $K_0 = 0$ and $M$ is an $(\filt{F}, \Qu)$-local martingale
with
\begin{align} \label{eq:180101.2}
	(1-K_t) M_t =  \Qu\left[\left.Z_t > 0 \right|  \sigalg{F}_t\right] =  \Qu\left[\left.\tau_0 > t \right|  \sigalg{F}_t\right], \qquad t \geq 0,
\end{align}
where we have introduced the $\filt{G}$-stopping time
\begin{align*} 
	\tau_0 &\dfn \inf\{t \geq 0 :  Z_t = 0\}.
\end{align*}
To ensure uniqueness of the multiplicative decomposition we assume that $M = M^{\rho}$ and $K = K^{\rho}$, where $\rho$ is the first time that $\Qu\left[\left.Z_\cdot > 0 \right|  \sigalg{F}_\cdot\right]$ hits zero, and additionally that $\Delta M_\rho = 0$ on the event $\{K_\rho = 1\}$. 

Note that the Bayesian setup of Sect.~\ref{sec:Bayes} leads to $Z = \zeta^\Theta$ and $M = 1$.

Let us collect some properties on these processes that we have introduced so far.

\begin{proposition} \label{P:180114.1}
	In the notation of this section, and under Assumption~\ref{Ass00}, the following statements hold.
	\begin{enumerate}
		\item\label{P:180114.1.i} The  process  $1 / {}^o Z^{\Qu}$ is an $(\filt{F}, \Pu)$-supermartingale, and satisfies
\begin{align} \label{eq:180114.1}
	\E_\Pu\left[\frac{1}{{}^o Z_t^{\Qu}} \indic_{A}\right] = \Qu\left[\{{}^o Z_t^{\Qu}  > 0\} \cap A\right], \quad t \geq 0, \quad A \in \sigalg{F}_t. 
\end{align}		
		\item\label{P:180114.1.ii} The process $ M / {}^o Z^{\Qu}$ is an  $(\filt{F}, \Pu)$-local martingale. Hence, the right-hand-side of \eqref{eq:171228'} also leads to the multiplicative Doob-Meyer decomposition of the $(\filt{F}, \Pu)$-supermartingale ${}^o Y^{\Pu}$.  
	\end{enumerate}
\end{proposition}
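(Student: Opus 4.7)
My plan is to build both parts around a single key observation: if $N_t \dfn {}^o Z_t^{\Qu} = \E_\Qu[Z_t \such \sigalg{F}_t]$, then, because $Z$ is a $(\filt{G}, \Qu)$-martingale and $\filt{F} \subseteq \filt{G}$, $N$ is a nonnegative $(\filt{F}, \Qu)$-martingale. Standard Bayes arguments show $N$ is precisely the density process of $\Pu$ with respect to $\Qu$ on $\filt{F}$, i.e., $(\ud \Pu / \ud \Qu)|_{\sigalg{F}_t} = N_t$. In particular, $\Pu[N_t > 0] = 1$ for every $t \geq 0$, so $1 / N_t$ is $\Pu$-a.e.\ finite.

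For part \eqref{P:180114.1.i}, I would first establish the identity \eqref{eq:180114.1} by a direct Bayes computation: for $A \in \sigalg{F}_t$,
\[
\E_\Pu\bra{\frac{1}{N_t} \indic_A} = \E_\Qu\bra{N_t \cdot \frac{1}{N_t}\indic_A} = \E_\Qu\bra{\indic_{\{N_t > 0\}} \indic_A} = \Qu[\{N_t > 0\} \cap A],
\]
interpreting $0/0 \dfn 0$ and using that $\{N_t = 0\}$ is $\Pu$-null. The $(\filt{F}, \Pu)$-supermartingale property is then immediate: applying the same computation conditionally at time $s \leq t$ gives
\[
\E_\Pu\bra{\frac{1}{N_t} \such \sigalg{F}_s} = \frac{\Qu[N_t > 0 \such \sigalg{F}_s]}{N_s} \leq \frac{1}{N_s},
\]
$\Pu$-a.e., since $\{N_s > 0\}$ holds $\Pu$-almost everywhere and $N$ cannot come back from zero.

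For part \eqref{P:180114.1.ii}, I would use the Bayes rule for local martingales: a process is an $(\filt{F}, \Pu)$-local martingale if and only if its product with the density $N$ is an $(\filt{F}, \Qu)$-local martingale, up to the first time $N$ vanishes (which is $\Pu$-a.e.\ never). Since $M$ is already an $(\filt{F}, \Qu)$-local martingale by construction, picking a $\Qu$-localizing sequence $(\tau_k)$ for $M$, I would verify via the conditional Bayes identity that $M^{\tau_k}/N$ is an $(\filt{F}, \Pu)$-martingale. The required fact that $\tau_k \uparrow \infty$ holds $\Pu$-a.e., and not merely $\Qu$-a.e., follows from $\Pu \ll_{\sigalg{F}_t} \Qu$ at each finite $t$, via $\Pu[\tau_k \leq t] = \E_\Qu[N_t \indic_{\{\tau_k \leq t\}}] \to 0$ by dominated convergence. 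Thus $M/N$ is an $(\filt{F}, \Pu)$-local martingale.

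The decomposition claim is then a bookkeeping step: rewriting \eqref{eq:171228'} as ${}^o Y^{\Pu} = (1-K) (M/N)$, the factor $M/N$ is a strictly positive $(\filt{F}, \Pu)$-local martingale (strict positivity coming from ${}^o Y^{\Pu} > 0$, which holds by Proposition~\ref{P:180112.1} applied under $\Pu$, together with $K < 1$ $\Pu$-a.e.), and $K$ is a nondecreasing $\filt{F}$-predictable $[0,1)$-valued process with $K_0 = 0$. By the uniqueness conventions recorded after \eqref{eq:180101.2}, this is the multiplicative Doob-Meyer decomposition of ${}^o Y^{\Pu}$. I expect the main technical point to be the transfer of $\Qu$-localization to $\Pu$-localization for $M$; everything else is a careful application of Bayes' rule in the two directions.
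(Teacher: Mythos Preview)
Your overall strategy matches the paper's: recognise $N \dfn {}^o Z^{\Qu}$ as the density process of $\Pu$ with respect to $\Qu$ on $\filt{F}$, and apply Bayes' formula in both directions. Part~\eqref{P:180114.1.i} is exactly as in the paper.

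For part~\eqref{P:180114.1.ii}, however, your localisation scheme has a gap. You take only a $\Qu$-localising sequence $(\tau_k)$ for $M$ and then appeal to the conditional Bayes identity; but applying \eqref{eq:180114.1} at the stopping time $t\wedge\tau_k$ produces
\[
\E_\Pu\!\left[\frac{M_t^{\tau_k}}{N_t^{\tau_k}}\, \indic_A\right] = \E_\Qu\!\left[M_t^{\tau_k}\, \indic_{\{N_{t\wedge\tau_k} > 0\}}\, \indic_A\right],
\]
and the indicator $\indic_{\{N_{t\wedge\tau_k} > 0\}}$ blocks the direct use of the $\Qu$-martingale property of $M^{\tau_k}$: under $\Qu$ the event $\{N_{t\wedge\tau_k} = 0\}$ can have positive probability, and nothing in your argument forces $M_{t\wedge\tau_k}$ to vanish there. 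The genuine technical difficulty is therefore not the transfer of $\tau_k \uparrow \infty$ from $\Qu$ to $\Pu$ (which you handle correctly), but the control of the $\Qu$-nonnull set $\{N = 0\}$.

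The paper closes this by taking $\tau = \tau_n' \wedge \tau_n''$, where $\tau_n'$ localises $M$ under $\Qu$ and $\tau_n''$ is the first time $N$ crosses the level $1/n$; then $N$ is bounded away from zero on $\dbraco{0, \tau}$, and one still has $\tau_n \uparrow \infty$ $\Pu$-a.e.\ since $\Pu[N > 0] = 1$. With this choice only the single time $\tau$ can be problematic, and the paper invokes the inclusion $\{N_\tau = 0\} \subseteq \{M_\tau = 0\}$ (which rests on the identity $\{N = 0\} = \{(1-K)M = 0\}$ together with the conventions fixed after \eqref{eq:180101.2}) to drop the indicator; the $\Qu$-martingale property of $M^\tau$ then yields the result directly.
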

\begin{proof}
Thanks to $(\ud \Pu / \ud \Qu)|_{\mathcal{F}_t} = {}^o Z_t^{\Qu}$ we have \eqref{eq:180114.1}, which then yields the statement in \eqref{P:180114.1.i}.
	
	Fix now $s, t \geq 0$ with $s<t$ and $A \in \sigalg{F}_s$ and let $\tau$ denote any bounded $\filt{F}$-stopping time such that $M^{\tau}$ is an $(\filt{F}, \Qu)$-martingale and ${}^o Z^{\Qu}$ is uniformly bounded away from zero on $\dbraco{0, \tau}$. Since ${}^o Z^{\Qu}$ is an $(\filt{F}, \Qu)$-martingale we then have $\{{}^o Z^{\Qu}_\tau = 0\} \subseteq\{M_\tau = 0\}$. Hence, from \eqref{eq:180114.1} we get
\begin{align*}
	\E_\Pu\left[\frac{M_t^\tau}{({}^o Z_t^{\Qu})^\tau} \indic_{A}\right] &= \E_\Qu\left[M_t^\tau \indic_{\{Z_{t \wedge \tau}^{\Qu}  > 0\}} \indic_{A}\right]
		=\E_\Qu\left[ M_t^\tau \indic_{A}\right] = \E_\Qu\left[ M_s^\tau \indic_{A}\right]   \\
		&= \E_\Qu\left[ M_s^\tau \indic_{\{{}^o Z^{\Qu}_{s \wedge \tau} > 0\}} \indic_{A}\right] = \E_\Pu\left[ \frac{M_s^\tau}{({}^o Z^{\Qu}_s)^\tau} \indic_{A}\right].
\end{align*}	
Hence $M^{\tau} / ({}^o Z^{\Qu})^\tau$ is an $(\filt{F}, \Pu)$-martingale.

Let now $(\tau_n')_{n \in \Natural}$ denote an $(\filt{F}, \Qu)$-localisation sequence of $M$ and let   $\tau_n''$ denote the first time that ${}^o Z^{\Qu}$ crosses the level $1/n$, for each $n \in \Natural$. Defining now $\tau_n = \tau_n' \wedge \tau_n''$ for each $n \in \Natural$ we get $\lim_{n \uparrow \infty} \tau_n = \infty$, $\Pu$-a.s, $M^{\tau_n}$ is an  $(\filt{F}, \Qu)$-martingale, and ${}^o Z^{\Qu}$ is uniformly bounded away from zero on $\dbraco{0, \tau_n}$. This then yields  statement~\eqref{P:180114.1.ii}.
\end{proof}

\begin{proposition} \label{P:180101.1}
	In the notation of this section, and under Assumption~\ref{Ass00}, the following statements concerning the optional projection ${}^\circ Y^\Pu$ are equivalent:
	\begin{enumerate}
		\item\label{P:180101.1.i} ${}^\circ Y^\Pu$ is an  $(\filt{F}, \Pu)$-local martingale.
		\item\label{P:180101.1.ii} $K$ is $\{0, 1\}$-valued, $\Qu$-a.e.
	\end{enumerate}
	Under any of the above equivalent conditions, it holds that $M = 1$, $\Qu$-a.e.; hence also ${}^\circ Y^\Pu = 1 / {}^o Z^{\Qu}$, $\Pu$-a.e.
\end{proposition}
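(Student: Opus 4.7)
The plan is to translate each of \eqref{P:180101.1.i} and \eqref{P:180101.1.ii} into a statement about the predictable process $K$ under $\Pu$, and then exploit the Radon--Nikodym relation $(\ud \Pu / \ud \Qu)|_{\sigalg{F}_t} = {}^o Z_t^\Qu$ to analyze $K$ under $\Qu$. Combining Proposition~\ref{P:180114.1}\eqref{P:180114.1.ii} with \eqref{eq:171228'} yields ${}^\circ Y^\Pu = (M/{}^o Z^\Qu)(1-K)$, $\Pu$-a.e., which, since both ${}^\circ Y^\Pu$ and $M/{}^o Z^\Qu$ are strictly positive $\Pu$-a.e., forces $K < 1$, $\Pu$-a.e., so that this is a bona fide multiplicative Doob--Meyer decomposition of ${}^\circ Y^\Pu$ under $\Pu$. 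By uniqueness, \eqref{P:180101.1.i} is equivalent to $K = 0$, $\Pu$-a.e., from which \eqref{P:180101.1.ii} $\Rightarrow$ \eqref{P:180101.1.i} is immediate, since $\Pu \ll_{\sigalg{F}_t} \Qu$ together with $K \in \{0,1\}$ $\Qu$-a.e.\ and $K < 1$ $\Pu$-a.e.\ force $K = 0$ $\Pu$-a.e.

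For the converse \eqref{P:180101.1.i} $\Rightarrow$ \eqref{P:180101.1.ii}, the condition $K = 0$, $\Pu$-a.e., translates via the density relation into $\{K_t > 0\} \subseteq \{{}^o Z_t^\Qu = 0\}$, $\Qu$-a.e. Since ${}^o Z_t^\Qu = 0$ forces $Z_t = 0$ $\Qu$-conditionally on $\sigalg{F}_t$, and hence $\Qu[Z_t > 0 \mid \sigalg{F}_t] = (1-K_t)M_t = 0$, the predictable stopping time $\tau \dfn \inf\{t : K_t > 0\}$ satisfies $\tau \geq \rho$, $\Qu$-a.e. Together with the freezing $K = K^\rho$, this means $K$ can become nonzero only through a predictable jump at $\rho$, so $K = K_\rho \indic_{\dbracc{\rho, \infty}}$ with $K_\rho \in [0, 1]$. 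To rule out $K_\rho \in (0, 1)$, observe that such a value forces $(1-K_\rho) M_\rho = 0$ and hence $M_\rho = 0$, so $\Delta M_\rho = -M_{\rho-}$; but predictability of $\rho$ and the local martingale property of $M$ require $\E_\Qu[\Delta M_\rho \mid \sigalg{F}_{\rho-}] = 0$, forcing $M_{\rho-} = 0$, contradicting $M_{\rho-} = 1$, to be established below.

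The main technical step, which supplies both $M_{\rho-} > 0$ above and the concluding identity $M = 1$, is to show $M \equiv 1$ on $\dbraco{0, \rho}$, $\Qu$-a.e. Since $K = 0$ on this interval, $M = (1-K)M = \Qu[Z_\cdot > 0 \mid \sigalg{F}_\cdot]$ is $[0,1]$-valued there. Under the standard normalization $Z_0 = 1$, so $M_0 = 1$, and localizing along $\filt{F}$-stopping times $\rho_n < \rho$ with $\rho_n \uparrow \rho$, the bounded $(\filt{F}, \Qu)$-martingale $M^{\rho_n}$ satisfies $\E_\Qu[M_{t \wedge \rho_n}] = 1$; combined with $M \leq 1$ this forces $M_{t \wedge \rho_n} = 1$, $\Qu$-a.e., and letting $n \uparrow \infty$ yields $M = 1$ on $\dbraco{0, \rho}$. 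The convention $\Delta M_\rho = 0$ on $\{K_\rho = 1\}$ together with the freezing $M = M^\rho$ then extends $M \equiv 1$, $\Qu$-a.e., to all of $[0, \infty)$. Finally, substituting $K = 0$ $\Pu$-a.e.\ and $M = 1$ $\Qu$-a.e.\ into \eqref{eq:171228'} gives ${}^\circ Y^\Pu = 1/{}^o Z^\Qu$, $\Pu$-a.e.
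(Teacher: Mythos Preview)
Your reduction of \eqref{P:180101.1.i} to the statement $K=0$, $\Pu$-a.e., via the multiplicative decomposition and Proposition~\ref{P:180114.1}\eqref{P:180114.1.ii}, as well as the direction \eqref{P:180101.1.ii}$\Rightarrow$\eqref{P:180101.1.i}, match the paper exactly. The issue lies in your organisation of \eqref{P:180101.1.i}$\Rightarrow$\eqref{P:180101.1.ii} and the final claim $M=1$.

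There is a genuine gap. Your argument that $M\equiv 1$ on $\dbraco{0,\rho}$ relies on an announcing sequence $\rho_n<\rho$, $\rho_n\uparrow\rho$, which presupposes that $\rho$ is predictable. You never establish this. Predictability of $\rho$ can be justified on $\{K_\rho>0\}$ (a predictable process can only jump at predictable times), but on $\{K_\rho=0,\ \rho<\infty\}$ the time $\rho$ is simply the first zero of the local martingale $M$, and nothing rules out that this is totally inaccessible. On that same event your ``extension'' step also fails: since $(1-K_\rho)M_\rho=0$ and $K_\rho=0$ force $M_\rho=0$, the convention $\Delta M_\rho=0$ (which only applies on $\{K_\rho=1\}$) does not help, and you have not shown $\Qu[K_\rho=0,\ \rho<\infty]=0$. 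So both the martingale contradiction ruling out $K_\rho\in(0,1)$ and the conclusion $M\equiv 1$ are incomplete.

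The paper's proof avoids all of this by reversing the order: it first shows $M\le 1$ on \emph{all} of $[0,\infty)$, not just on $\dbraco{0,\rho}$. On $\{K=0\}$ one has $M=(1-K)M\le 1$; on $\{0<K<1\}$ the inclusion $\{K>0\}\subseteq\{{}^o Z^\Qu=0\}=\{K=1\}\cup\{M=0\}$ forces $M=0$; and at the time $K$ first hits one the convention $\Delta M_\rho=0$ gives $M_\rho=M_{\rho-}\le 1$, after which $M=M^\rho$. Thus $M\le 1$ globally, so the $(\filt{F},\Qu)$-local martingale $M$ with $M_0=1$ is bounded, hence $M\equiv 1$. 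Only \emph{then} does one read off $\{K>0\}\subseteq\{K=1\}\cup\{M=0\}=\{K=1\}$, giving $K\in\{0,1\}$ directly. No predictability of $\rho$, no announcing sequence, and no separate treatment of $K_\rho\in(0,1)$ are needed.
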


\begin{proof}
	Let us first assume that  statement \eqref{P:180101.1.i}   holds, i.e., ${}^\circ Y^\Pu = (1-K)M/{}^\circ Z^\Qu$ is an  $(\filt{F}, \Pu)$-local martingale. Then Proposition \ref{P:180114.1}\eqref{P:180114.1.ii} yields that $K = 0$, $\Pu$-a.e; hence 
		 \begin{align} \label{eq:190912}
	 	\{K > 0\} \subseteq \{^\circ Z^\Qu = 0\} =  \{K = 1\} \cup \{M = 0\}, \qquad \text{$\Qu$-a.e.}
	\end{align}
	Furthermore, since the $(\filt{F}, \Qu)$-supermartingale $\Qu[Z_\cdot >0 \, |  \, \sigalg{F}_\cdot] = (1 - K) M$ is $[0,1]$-valued, we have 
\begin{align}  \label{eq:180101.3}
	 	\{K = 0\} \subseteq\{M \leq 1\}, \qquad \text{$\Qu$-a.e.}
\end{align}	
Combining now \eqref{eq:190912} and \eqref{eq:180101.3} yields that $M \leq 1$ on $\dbraco{0 , \rho}$, where $\rho$ is the predictable time when $K$ hits one. Since additionally by assumption $\Delta M_\rho = 0$ and $M = M^\rho$ on the event $\{K_\rho = 1\}$, we have $M \leq 1$. Next, since $M$ is also an $(\filt{F}, \Qu)$-local martingale with $M_0 = 1$ we obtain that $M = 1$, $\Qu$-a.e.  Then again recalling \eqref{eq:190912} yields that  $K$ is $\{0, 1\}$-valued, $\Qu$-a.e.

Assume now that statement~\eqref{P:180101.1.ii} holds. Then since $K < 1$ holds $\Pu$-a.e., we have $K = 0$, $\Pu$-a.e., and an application of Lemma~\ref{P:180114.1}\eqref{P:180114.1.ii} yields  \eqref{P:180101.1.i}.
\end{proof}

\begin{example}
Assume that the underlying probability space, equipped with the probability measure $\Qu$, supports a $\Qu$-Brownian motion $W$ and an independent  $\Real$-valued random variable $\Theta$.  Consider the filtration $\filt{G}$ to be the smallest right-continuous one that makes $W$ adapted, and such that $\Theta$ is $\mathcal{G}_0$-measurable. Moreover, consider the $\filt{G}$-stopping time
\[
 \tau_0 \dfn \inf\{t \geq 0 :  \indic_{\{\Theta \neq 0\}} W_t = 1\}.
 \]
 Consider also the nonnegative $(\filt{G}, \Qu)$-supermartingale 
 \begin{align*}
 	Z &\dfn \Exp\left(\int_0^\cdot  \frac{-\Theta}{1- W_u} \ud W_u\right) \indic_{\dbraco{0 , \tau_0}} \\
	&= (1 - W)^\Theta \exp\left(\frac{\Theta - \Theta^2}{2} \int_0^\cdot  \frac{1}{(1-W_u )^2 } \ud u\right)  \indic_{\dbraco{0 , \tau_0}}.
\end{align*}
Since $\int_0^\tau \Theta^2 (1-W_u)^{-2} \d u = \infty$ we have $Z$
is continuous by Larsson and Ruf \cite[Theorem~4.2]{Larsson:Ruf:convergence}. This then yields
that $Z$ is a  $(\filt{G}, \Qu)$-local martingale. We  assume from now on that $\Qu[\Theta \in \{0\} \cup [1/2, \infty)] = 1$, as this is a necessary and sufficient condition for $Z$ to be a  $(\filt{G}, \Qu)$-martingale, by the arguments in Ruf \cite{Ruf_martingale}.

Set now $S \dfn \Exp(W)$  and $\filt{F} \dfn \filt{F}^W$ and define the $\filt{F}$-predictable time
\[
 \tau_0^W \dfn \inf\{t \geq 0 :  W_t = 1\}.
\]
Then we obtain $K = \Qu[\Theta \geq 1/2] \indic_{\dbracc{\tau_0^W, \infty}}$ and $M = 1$.  Hence, by Proposition~\ref{P:180101.1}, we have that ${}^\circ Y^\Pu$ is  an  $(\filt{F}, \Pu)$-local martingale if and only if either $\Qu[\Theta \geq 1/2] = 1$ or $\Qu[\Theta = 0] = 1$. In the later case, ${}^\circ Y^\Pu=1$ is  an  $(\filt{F}, \Pu)$-martingale.

If $\mu[\ud \theta] \dfn \Qu[\Theta \in \ud \theta]$, $\theta \in \{0\} \cup [1/2, \infty)$,  describes the marginal law of $\Theta$, then 
\[
{}^\circ Z^\Qu = \Qu[\Theta = 0] + \int_{1/2}^{\infty}  (1 - W)^\theta \exp\left(\frac{\theta - \theta^2}{2} \int_0^\cdot  \frac{1}{(1-W_u )^2 } \ud u\right)  \indic_{\dbraco{0 , \tau_0}} \mu[\ud \theta]
\]
and hence
\begin{align*}
	{}^\circ Y^\Pu &= \left(\Qu[\Theta = 0] + \int_{1/2}^{\infty}  (1 - W)^\theta \exp\left(\frac{\theta - \theta^2}{2} \int_0^\cdot  \frac{1}{(1-W_u )^2 } \ud u\right)   \mu[\ud \theta]\right)^{-1}  \\
		&\times \indic_{\dbraco{0 , \tau_0^W}}  +  \indic_{\dbraco{\tau_0^W, \infty}}. 
\end{align*}
On the event $\{\tau_0^W < \infty\}$ we have ${}^\circ Y^\Pu_{\tau_0^W-} = 1/\Qu[\Theta = 0]$, $\Pu$-a.e., illustrating that if $\Qu[\Theta = 0]  > 0$,  then indeed ${}^\circ Y^\Pu$ is not an $(\filt{F}, \Pu)$-local martingale.
\end{example}

\begin{remark} 
	Under any of the conditions in Proposition~\ref{P:180101.1}, it holds that $M = 1$.  A general characterization of when exactly $M = 1$ holds eludes us at the time of writing. However, when $M = 1$ then $\tau_0$ is an $(\filt{F}, \Qu)$-pseudo-stopping time, meaning $\E_\Qu[N_\rho] = \E_\Qu[N_0]$ for each $(\filt{F}, \Qu)$-uniformly integrable martingale $N$; vice versa, if each $(\filt{F}, \Qu)$-martingale is continuous and $\tau_0$ is an $(\filt{F}, \Qu)$-pseudo stopping time, then $M = 1$. These facts follows from Nikeghbali and Yor \cite[Theorem~1]{Nikeghbali:Yor:05}. (The proof of \cite[Theorem~1]{Nikeghbali:Yor:05} only requires the continuity  of $(\filt{F}, \Qu)$-martingales in one direction; moreover, the assumption $\tau_0 < \infty$ in that paper can also omitted by a change-of-time argument.) In light of this fact, the previous section adds new examples of pseudo-stopping times to the literature.
\end{remark}

Thanks to Theorem~\ref{thm:1}, the process $M / {}^o Z^{\Qu}$ is of special interest, as it serves as an $\filt{F}$-local martingale deflator.

\begin{proposition} \label{P:180101.2}
	In the notation of this section, and under Assumption~\ref{Ass00}, the following statements concerning the   $(\filt{F}, \Pu)$-local martingale $M / {}^o Z^{\Qu}$  are equivalent.
	
\begin{enumerate}
	\item\label{P:180101.i} $M / {}^o Z^{\Qu}$ is an $(\filt{F}, \Pu)$-martingale.
	\item\label{P:180101.ii} $M$ is an $(\filt{F}, \Qu)$-martingale, and $\{{}^o Z^{\Qu} = 0 \} \subseteq \{M = 0\}$,  $\Qu$-a.s.
	\item\label{P:180101.iii} $M$ is an $(\filt{F}, \Qu)$-martingale, and $\{K = 1 \} \subseteq \{M = 0\}$, $\Qu$-a.s.
\end{enumerate}
\end{proposition}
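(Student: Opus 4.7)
The plan is to establish (i) $\iff$ (ii) from a Bayes-type identity combined with the fact that a nonnegative local martingale of constant mean is a true martingale, and then to derive (ii) $\iff$ (iii) by rewriting the zero set of ${}^o Z^{\Qu}$ via the multiplicative decomposition.

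For (i) $\iff$ (ii), I would first note that the density formula \eqref{eq:180114.1} extends, by taking a general nonnegative $\sigalg{F}_t$-measurable random variable $\xi$, to
\[
\E_\Pu \bra{\frac{\xi}{{}^o Z_t^{\Qu}}} = \E_\Qu \bra{\xi \indic_{\{{}^o Z_t^{\Qu} > 0\}}}, \qquad t \geq 0.
\]
By Proposition~\ref{P:180114.1}\eqref{P:180114.1.ii}, $M/{}^o Z^{\Qu}$ is a nonnegative $(\filt{F}, \Pu)$-local martingale starting from $1$, so it is a true $(\filt{F}, \Pu)$-martingale if and only if $\E_\Pu[M_t / {}^o Z_t^{\Qu}] = 1$ holds for every $t \geq 0$. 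Applying the identity above with $\xi = M_t$, this is equivalent to $\E_\Qu[M_t \indic_{\{{}^o Z_t^{\Qu} > 0\}}] = 1$ for every $t \geq 0$. Since $M$ is itself a nonnegative $(\filt{F}, \Qu)$-local martingale with $M_0 = 1$, the chain of inequalities $\E_\Qu[M_t \indic_{\{{}^o Z_t^{\Qu} > 0\}}] \leq \E_\Qu[M_t] \leq 1$ achieves simultaneous equality precisely when $M$ is an $(\filt{F}, \Qu)$-martingale and $M_t \indic_{\{{}^o Z_t^{\Qu} = 0\}} = 0$, $\Qu$-a.s. I would then upgrade this per-$t$ conclusion to the process-level inclusion stated in (ii) using the absorption-at-zero property of the nonnegative càdlàg (super)martingales $M$ and ${}^o Z^{\Qu}$.

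For (ii) $\iff$ (iii), I would invoke \eqref{eq:180101.2}, which reads $(1 - K)M = \Qu[Z_\cdot > 0 \such \sigalg{F}_\cdot]$. For a nonnegative random variable, vanishing of the conditional expectation coincides with vanishing of the conditional probability of strict positivity, so
\[
\{{}^o Z^{\Qu} = 0\} = \{(1-K)M = 0\} = \{K = 1\} \cup \{M = 0\}, \qquad \Qu\text{-a.s.}
\]
Under the martingale hypothesis on $M$ that appears in both (ii) and (iii), the inclusion $\{{}^o Z^{\Qu} = 0\} \subseteq \{M = 0\}$ then reduces to $\{K = 1\} \subseteq \{M = 0\}$, and the equivalence follows.

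The main obstacle I anticipate is not conceptual but organizational: both the Bayes identity and the inequality $\E_\Qu[M_t \indic_{\{{}^o Z_t^{\Qu} > 0\}}] \leq \E_\Qu[M_t]$ are per-$t$ statements, whereas the inclusions in (ii) and (iii) are phrased at the process level. Careful use of the càdlàg paths and the absorption-at-zero of nonnegative (super)martingales should bridge this gap without new ideas.
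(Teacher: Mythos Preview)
Your proposal is correct and follows essentially the same route as the paper: the key identity $\E_\Pu[M_t/{}^o Z^{\Qu}_t]=\E_\Qu[M_t\indic_{\{{}^o Z^{\Qu}_t>0\}}]=\E_\Qu[M_t]-\E_\Qu[M_t\indic_{\{{}^o Z^{\Qu}_t=0\}}]$ gives (i)$\iff$(ii), and the set equality $\{{}^o Z^{\Qu}=0\}=\{K=1\}\cup\{M=0\}$ (from \eqref{eq:180101.1}--\eqref{eq:180101.2}) gives (ii)$\iff$(iii). Your treatment is in fact more explicit than the paper's about the passage from per-$t$ statements to process-level inclusions; the paper's proof is terse on this point.
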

\begin{proof}
	Note that
	\begin{align*}
		\E_\Pu\left[\frac{M_t }{ {}^o Z^{\Qu}_t}\right] &= \E_\Qu\left[M_t \indic_{\{{}^o Z^{\Qu}_t > 0\}}\right] = \E_\Qu[M_t] - \E_\Qu\left[M_t \indic_{\{ {}^o Z^{\Qu}_t = 0\}}\right], \quad t \geq 0.
	\end{align*}
	This yields the equivalence of statements \eqref{P:180101.i} and \eqref{P:180101.ii}. For the equivalence of statements \eqref{P:180101.ii} and \eqref{P:180101.iii}, one only needs to observe $\{{}^o Z^{\Qu} = 0\} = \{K = 1\} \cup \{M = 0\}$, thanks to \eqref{eq:180101.1} and \eqref{eq:180101.2}.  
	\end{proof}

We continue with a couple of examples. The first one involves a non-constant $M$ appearing as information on $\tau_0$ gets revealed in $\filt{F}$, and illustrates the necessary and sufficient conditions of Proposition~\ref{P:180101.2}.

\begin{example}    \label{Ex:180925}
Assume that the underlying probability space, equipped with the probability measure $\Qu$, supports a $\Qu$-Brownian motion $W$ and an independent  random variable $\Theta$ with $\Qu[\Theta = -1] = q/2 = \Qu[\Theta = 1]$ and $\Qu[\Theta = 0] = 1 - q$, where $q \in (0,1]$. Consider the filtration $\filt{G}$ to be the smallest right-continuous one that makes $W$ adapted, and such that $\Theta$ is $\mathcal{G}_0$-measurable. Moreover, consider the $\filt{G}$-stopping times
\begin{align*}
	\nu &\dfn  \inf\{t \geq 0 :  |W_t| = 1\},  \\
	\rho &\dfn \inf\{t \geq \nu :  W_t = - \sign (W_\nu) \}, \\
	\tau_0 &\dfn \inf\{t \geq 0 :  \Theta W_t = 1\}.	
\end{align*}
Note that $\Qu[0 < \nu < \rho < \infty] = 1$, and that $\tau_0$ equals $\Qu$-a.e.~either $\nu$, $\rho$ or $\infty$ (the latter infinite value if, and only if, $\Theta = 0$). Consider also the nonnegative $(\filt{G}, \Qu)$-martingale $Z \dfn 1 - \Theta W^{\tau_0}$.

Let $\filt{F}$ be  the smallest right-continuous filtration that makes $W$ adapted, and such that $\Theta$ is $\mathcal{F}_\rho$-measurable. Under $\filt{F}$, $\Theta$ is only revealed at $\rho$, as opposed to $\filt{G}$ where $\Theta$ is known from the beginning of time. We set $S \dfn \Exp(W)$, which is  both a $(\filt{G}, \Qu)$- and an $(\filt{F}, \Qu)$-martingale. We also note that $\nu$ and $\rho$ are $\filt{F}$-predictable  times.  Observe that, despite the different filtration structure, this example resembles Example~\ref{Ex:190823}. In both cases, $W$ is Brownian motion conditioned to never hit level $1 / \Theta$.

With the above set-up, we compute
\[
\Qu\left[\left. \tau_0 \leq t \right|  \sigalg{F}_t\right]  =  \frac{q}{2} \indic_{\dbraco{\nu , \rho}}(t) + \indic_{\{\Theta \neq 0\}} \indic_{\dbraco{\rho, \infty}}(t), \qquad t \geq 0.	
\]
Hence,  we have  
\[
K =\frac{q}{2}  \indic_{\dbraco{\nu , \rho}} + q \indic_{\dbraco{\rho, \infty}}; \qquad M = \indic_{\dbraco{0, \rho }}  + \frac{1}{1-q} \indic_{\{\Theta = 0\}} \indic_{\dbraco{\rho, \infty}},	
\]
with the understanding that $M = 1$ if $q = 1$. Note that $M$ is a  bounded  $(\filt{F}, \Qu)$-martingale. Moreover, straightforward computations give
\begin{align*}
	{}^o Z^{\Qu} = \indic_{\dbraco{0, \nu}} + \left(1 - q + \frac{q}{2} (1 + \sign (W_\nu) W) \right) \indic_{\dbraco{\nu, \rho }}  +  \indic_{\{\Theta = 0 \}}   \indic_{\dbraco{\rho , \infty}}.
\end{align*}	
Hence, when $q \in (0,1)$,
\begin{align*}
\frac{M}{{}^o Z^{\Qu}} = \indic_{\dbraco{0, \nu}} + \frac{1}{1 - q + (q / 2) (1 + \sign (W_\nu) W)} \indic_{\dbraco{\nu, \rho }}  +  \frac{1}{1-q} \indic_{\{\Theta = 0 \}}   \indic_{\dbraco{\rho , \infty}},
\end{align*}
$\Pu$-a.e., which is a  bounded  $(\filt{F}, \Pu)$-martingale; however, when $q = 1$, then
\[
\frac{M}{{}^o Z^{\Qu}} = \indic_{\dbraco{0, \nu}} + \frac{2}{1 + \sign (W_\nu) W}  \indic_{\dbraco{\nu, \infty }},	
\]
$\Pu$-a.e., 
which can be seen to be a strict local $(\filt{F}, \Pu)$-martingale. These observations are
 consistent with the result of Proposition~\ref{P:180101.2}.
\end{example}

We next modify Example~\ref{Ex:180925} to illustrate that it is also possible that the local martingale part   $M$ in the multiplicative decomposition of $(\Qu[\tau > t|\sigalg{F}_t])_{t \geq 0}$ is continuous. 
\begin{example}
	Assume that the underlying probability space, equipped with the probability measure $\Qu$, supports a pair of independent $\Qu$-Brownian motions $(W, B)$. Let $\filt{F} \dfn \filt{F}^{(W,B)}$ and let  $\filt{G}$ denote the smallest right-continuous filtration that makes $W$ adapted, and contains all the information of $B$ already at time $0$. Consider the process $\psi \dfn \sqrt{2} \int_0^\cdot \exp(-u) \ud B_u$, and note that $\Theta \dfn \psi_\infty$ is $\mathcal{G}_0$-measurable with standard normal distribution, and that the conditional law of $\Theta$ given $\mathcal{F}_t$ is Gaussian with mean $\psi_t$ and standard deviation $\exp(-t)$ for each $t \geq 0$. Set, as before, $\tau_0 \dfn \inf\{t \geq 0 :  \Theta W_t = 1  \}$, and consider the nonnegative $(\filt{G}, \Qu)$-martingale $Z \dfn 1 - \Theta W^{\tau_0}$.
	
	With $\underline{\theta} \dfn 1 / \inf_{u \in [0, \cdot]}  W_u$ and $\overline{\theta} \dfn 1 / \sup_{u \in [0, \cdot]}  W_u$, note that $\{\tau_0 > t\} = \{ \underline{\theta}_t < \Theta < \overline{\theta}_t\}$. It follows that		
	\[
	A_t \dfn \Qu\left[\left. \tau_0 > t \right|  \sigalg{F}_t\right]  = \Phi \left( \exp(t)(\overline{\theta}_t - \psi_t) \right) - \Phi \left( \exp(t)(\underline{\theta}_t - \psi_t) \right), \qquad t \geq 0.,
	\]
	where $\Phi$ denotes the standard normal distribution function.
	Writing the dynamics of the above, we see that the local martingale part in the additive decomposition of $A$ has non-zero quadratic variation everywhere. The same properties carry over to the multiplicative decomposition, yielding that $M$ is a Brownian local martingale with strictly increasing quadratic variation.
\end{example}

The next example has similar features as the setup of Sect.~\ref{sec:Bayes}, in the sense that he projection of the local martingale deflator loses mass whenever in the small filtration one learns about the sign of an excursion of a Brownian motion. This example also relates to the framework of the following section.

\begin{example}
Assume that the underlying probability space, equipped with the probability measure $\Qu$, supports a $\Qu$-Brownian motion $W$ and let $B \dfn \int_0^\cdot \sign(W_u) \ud W_u = |W| - \Lambda$ denote its L\'evy transformation, as mentioned in the introduction. Consider the filtrations $\filt{G} \dfn \filt{F}^W$ and $\filt{F} \dfn \filt{F}^B = \filt{F}^{|W|}$. Let us write
	\[
		\tau_0 \dfn \inf\{t \geq 0 :  1 + W_t - B_t = 0\} =   \inf\left\{t \geq 0 :  W_t = - \frac{1+\Lambda_t}{2}\right\},
	\]
	where $\Lambda$ denotes local time of $W$ at zero.
We now set $S \dfn \Exp(B)$ and consider the process
\[
Z \dfn \Exp\left (\int_0^\cdot  \frac{1}{1 + W_u - B_u} \ud B_u\right)  \indic_{\dbraco{0, \tau_0 }}.
\]	
We claim that $Z$ has continuous paths and is a $(\filt{G}, \Qu)$-martingale. To see path-continuity, note that just before $\tau_0$ the process $1 + W - B = 1 + W - |W|+\Lambda$ behaves like twice a Brownian motion hitting level zero, given that  $\Lambda$ will be flat (since $W$ is away from zero); then, it suffices to note that $\int_0^\cdot (1 + \beta_u)^{-2} \ud u$ explodes at the first time that a Brownian motion $\beta$ hits $-1$. Path-continuity of $Z$, coupled with its definition, implies that it is a $(\filt{G}, \Qu)$-local martingale. To see the actual martingale property of $Z$, we follow the arguments in  Ruf \cite{Ruf_martingale} as follows. Note that a continuous nonnegative local martingale $Z$ is also a local martingale in its own filtration (since one may choose the localising sequence to consist of level-crossing times); therefore, for proving that it is an actual martingale, which is equivalent to showing that it has constant expectation in time, one may assume that $Z$ lives on an appropriate canonical path-space, where results from F\"ollmer \cite{F1972} on change of measure can be utilised.
Consider then the F\"ollmer measure $\overline{\Pu}$, given by the extension of the measures defined via the Radon-Nikodym derivatives $(Z_{\tau_n \wedge n})_{n \in \Natural}$ on the increasing sequence  $(\sigalg F_{\tau_n \wedge n})_{n \in \Natural}$, where $(\tau_n)_{n \in \Natural}$ is a $\filt{G}$-localisation sequence for $Z$.   For some $\overline{\Pu}$-Brownian motion $U$ we then have
	\[
		W = \int_0^\cdot \frac{\sign(W_u)}{1 + W_u - B_u} \ud u + U.
	\]
	Hence, whenever $1+W-B$ becomes small then $W$ moves like a two-dimensional  $(\filt{G}, \overline{\Pu})$-Bessel process. In particular, $1-W-B$ never hits zero and $\int_0^\cdot  (1 + W_u - B_u)^{-2} \ud u < \infty$, $\overline{\Pu}$-a.e., yielding that $Z$ is indeed a martingale.

Let us now consider the $\filt{F}$-predictable  times $(\rho_i)_{i \in \mathbb N_0}$ and $(\tau_i)_{i \in \mathbb N}$, defined inductively by $\rho_0 \dfn 0$ and 
\[
	\tau_i \dfn \inf \left\{t >\rho_{i-1}:  |W_t| = \frac{1 + \Lambda_t}{2}\right\}; \quad \rho_i \dfn \inf \{t >\tau_i:  |W_t| = 0\}, \quad i \in \mathbb N.
\]
Then we have
\begin{align*}
	 \Qu\left[\left.\tau_0> t \right|  \sigalg{F}_t\right] = \left(\frac{1}{2}\right)^{\# \{i \in \mathbb{N}: \tau_i \leq t\}}, \qquad t \geq 0.
\end{align*}
Hence, by  \eqref{eq:180101.2}, we get
\begin{align*}
	 K_t = 1 -  \left(\frac{1}{2}\right)^{\# \{i \in \mathbb{N}: \tau_i \leq t\}}, \qquad t \geq 0.
\end{align*}
For the $\filt{F}$-optional $\Pu$-projection ${}^\circ Y^{\Pu}$ of the $\filt{G}$-local martingale deflator $Y = 1/Z$, we then have
$
	{}^\circ Y^{\Pu} = (1-K)/{{}^o Z^{\Qu}} ,
$
where $1/{}^o Z^{\Qu}$ is an $(\filt{F}, \Pu)$-martingale by Proposition~\ref{P:180101.2}.
\end{example}

In Example~\ref{Ex:180214.1}, it was shown that projections of reciprocals of $\filt{G}$-num\'eraires are not necessarily reciprocals of $\filt{F}$-num\'eraires. However, we have the following result.

\begin{proposition} \label{P:180107.1}
	In the notation of this section, suppose that  Assumption~\ref{Ass00} holds. Moreover, assume Jacod's hypothesis~(H) holds under $\Qu$; i.e., each $(\filt{F}, \Qu)$-martingale is also a $(\filt{G}, \Qu)$-martingale.   Then the following statements hold:
	\begin{enumerate}
		\item\label{P:180107.i}
			 $K_\rho \indic_{\{\rho < \infty\}} = {\Qu}[\tau_0 \leq \rho | \sigalg F_\infty] \indic_{\{\rho < \infty\}} $ for all $\filt{F}$-predictable times $\rho$  and $M = 1$.
		\item\label{P:180107.ii}
			 If $Y = 1/Z$ is a $\filt{G}$-num\'eraire deflator then $1/{}^o Z^{\Qu}$ is an $\filt{F}$-num\'eraire deflator. 
	\end{enumerate}
\end{proposition}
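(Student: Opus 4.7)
The natural starting point for part~(i) is the standard reformulation of hypothesis~(H): $\sigalg{G}_t$ and $\sigalg{F}_\infty$ are $\Qu$-conditionally independent given $\sigalg{F}_t$ for every $t \geq 0$. Applied to the $\sigalg{G}_t$-measurable event $\{\tau_0 > t\}$ and combined with \eqref{eq:180101.2}, this yields
\[
(1-K_t)M_t \;=\; \Qu[\tau_0 > t \such \sigalg{F}_t] \;=\; \Qu[\tau_0 > t \such \sigalg{F}_\infty], \qquad t \geq 0.
\]
The right-hand side, being a regular $\sigalg{F}_\infty$-conditional survival function of $\tau_0$, is pathwise nonincreasing and right-continuous, so the $(\filt{F}, \Qu)$-supermartingale $A \dfn (1-K)M$ is a nonincreasing process of finite variation.

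To deduce $M = 1$, my plan is to show that $A$ is itself $\filt{F}$-predictable and then invoke uniqueness of the multiplicative Doob--Meyer decomposition (together with the convention imposed on $(K, M)$ before the statement), which identifies $1 - K = A$ and $M = 1$. Concretely, I would examine the additive Doob--Meyer decomposition $A = 1 + N - D$, where $N$ is an $\filt{F}$-local martingale and $D$ an $\filt{F}$-predictable nondecreasing process. Since $A$ has finite variation, so does $N$, which must therefore be purely discontinuous with jumps only at $\filt{F}$-totally inaccessible stopping times. Using the explicit jump formula $\Delta A_T = -\Qu[\tau_0 = T \such \sigalg{F}_\infty]$ together with the conditional independence from (H), these jumps are to be shown to vanish, forcing $N = 0$. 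With $M = 1$ in hand, the equality $K_t = \Qu[\tau_0 \leq t \such \sigalg{F}_\infty]$ holds at every $t \geq 0$, and evaluation at an $\filt{F}$-predictable stopping time $\rho$ via a regular conditional probability version delivers the identity claimed on $\{\rho < \infty\}$.

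For part~(ii), the assumption that $Y = 1/Z$ is the $\filt{G}$-num\'eraire deflator means $Z = \widehat{V}^{\filt{G}}$, which by Theorem~\ref{T:1FTAP} is of the form $\Exp(\int_0^\cdot H^{\filt{G}}_u \ud S_u)$ for the $\filt{G}$-predictable drift coefficient $H^{\filt{G}}$ of $S$. Since $S$ is continuous and $\filt{F}$-adapted, Stricker's theorem exhibits $S$ as an $\filt{F}$-semimartingale with its own canonical decomposition $S = S_0 + A^{\filt{F}} + M^{\filt{F}}$. Under (H), $M^{\filt{F}}$ is also a $\filt{G}$-local martingale while $A^{\filt{F}}$ is automatically $\filt{G}$-predictable, so uniqueness of the canonical $\filt{G}$-decomposition forces $A^{\filt{G}} = A^{\filt{F}}$ and $M^{\filt{G}} = M^{\filt{F}}$. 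Consequently the drift coefficient admits an $\filt{F}$-predictable version and $\widehat{V}^{\filt{G}} = \widehat{V}^{\filt{F}}$ is $\filt{F}$-adapted; in particular $Z$ itself is $\filt{F}$-adapted, so ${}^o Z^{\Qu} = Z$, and $1/{}^o Z^{\Qu} = 1/Z = Y$ is the $\filt{F}$-num\'eraire deflator.

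The main obstacle lies in the step of part~(i) that establishes $\filt{F}$-predictability of $A$: controlling jumps of the $\sigalg{F}_\infty$-conditional survival function at $\filt{F}$-totally inaccessible stopping times is the delicate point, and is closely connected to $\tau_0$ being an $(\filt{F}, \Qu)$-pseudo-stopping time in the sense of the remark preceding the proposition. Once that is in place, the remainder of the argument reduces to uniqueness of multiplicative Doob--Meyer for part~(i) and to $\filt{F}$-adaptedness of $\widehat{V}^{\filt{G}}$ for part~(ii).
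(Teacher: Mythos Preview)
Your plan for part~(i) is on the right track, and you correctly isolate the crux: establishing that $A = (1-K)M$ is $\filt{F}$-predictable. However, you leave this step open, and the ingredient you are missing is precisely the one the paper exploits: the $(\filt{G},\Qu)$-martingale $Z$ is continuous, so $\tau_0$ is (a.e.\ equal to) a $\filt{G}$-\emph{predictable} time. This makes $\indic_{\{\tau_0\leq\rho\}}$ $\sigalg{G}_{\rho-}$-measurable for every $\filt{F}$-predictable $\rho$, and then the conditional-independence form of~(H) at $\sigalg{F}_{\rho-}$ yields
\[
\Qu[\tau_0\le\rho\mid\sigalg{F}_{\rho-}]\;=\;\Qu[\tau_0\le\rho\mid\sigalg{F}_\infty],
\]
which shows that the predictable projection of $A$ coincides with $A$. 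Your proposed jump analysis at $\filt{F}$-totally inaccessible times does not by itself rule out nontrivial martingale jumps at accessible times, and in any case would still need the $\filt{G}$-predictability of $\tau_0$ to control $\Qu[\tau_0=T\mid\sigalg{F}_\infty]$.

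Part~(ii) contains a genuine error. Hypothesis~(H) is assumed \emph{under~$\Qu$}, but the num\'eraire is defined via the canonical decomposition of $S$ \emph{under~$\Pu$}. Your argument that $M^{\filt{F}}$ is a $\filt{G}$-local martingale, and hence that the $\filt{F}$- and $\filt{G}$-decompositions of $S$ coincide, would require (H) under~$\Pu$; the remark following the proposition in the paper explicitly notes that this fails in general. Concretely, in the Bayesian set-up of Sect.~\ref{sec:Bayes} (which satisfies all hypotheses here), one has $Z=\zeta^\Theta$, which depends on the $\sigalg{G}_0$-measurable $\Theta$ and is \emph{not} $\filt{F}$-adapted; thus ${}^oZ^{\Qu}=\zeta\neq Z$, contradicting your conclusion. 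The paper instead works under~$\Qu$: it writes ${}^oZ^{\Qu}=\Exp(\int_0^\cdot\theta_u\,\ud S_u)\,N$ with $N$ an $(\filt{F},\Qu)$-local martingale orthogonal to $S$, uses~(H) to promote $N$ to a $(\filt{G},\Qu)$-local martingale, observes that $NZ$ is then a $(\filt{G},\Qu)$-local martingale so that $N$ (and likewise $1/N$) is an $(\filt{F},\Pu)$-local martingale, forcing $N=1$. This exhibits ${}^oZ^{\Qu}$ as a wealth process whose reciprocal is an $\filt{F}$-local martingale deflator, i.e., the $\filt{F}$-num\'eraire deflator.
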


\begin{proof}
	For statement \eqref{P:180107.i}, it suffices to argue that ${\Qu}[\tau_0 \leq \cdot | \sigalg F_\infty]$ is the $\filt{F}$-predictable projection of the process $\indic_{\{\tau_0 \leq \cdot\}}$. That is, for a given $\filt{F}$-predictable time $\rho$ we need to argue that
	\begin{align} \label{eq:190914}
		 {\Qu}[\tau_0 \leq \rho | \sigalg F_\infty] \indic_{\{\rho < \infty\}} = {\Qu}[\tau_0 \leq \rho | \sigalg F_{\rho-}] \indic_{\{\rho < \infty\}}.
	\end{align}
	We shall argue this assertion by showing that the right-hand side is indeed the $\sigalg{F}_\infty$-conditional expectation of  $\indic_{\{\tau_0 \leq \rho < \infty\}}$. To this end, fix $A \in \sigalg{F}_\infty$ and note that
$
		\Qu[A | \sigalg{F}_{\rho-}]   = \Qu[A | \sigalg{G}_{\rho-}] 
$
	since each $(\filt{F}, \Qu)$-martingale is also a $(\filt{G}, \Qu)$-martingale by  assumption.	This now yields
	\begin{align*}
		\E_\Qu\left[{\Qu}[\tau_0 \leq \rho | \sigalg F_{\rho-}] \indic_{\{\rho < \infty\}} \indic_{A}\right]
			&= \E_\Qu \left[ {\Qu}[A | \sigalg F_{\rho-}] \indic_{\{\tau_0 \leq \rho < \infty\}}\right]
			\\
			&= \E_\Qu \left[ {\Qu}[A | \sigalg G_{\rho-}] \indic_{\{\tau_0 \leq \rho < \infty\}}\right] \\
			&=  \Qu \left[A \cap \{ \tau_0 \leq \rho < \infty\}\right],
	\end{align*}
	where the last equality uses the fact that $\tau_0$  is $\Qu$-a.e.~equal to a $\filt{G}$-predictable time since $Z$ does not jump to zero; see  Larsson and Ruf \cite[Lemma~3.5]{Larsson:Ruf:2017}. This now yields \eqref{eq:190914}.
	
	For statement \eqref{P:180107.ii}, observe that we have ${}^o Z^{\Qu} = \Exp(\int_0^\cdot \theta_u \ud  S_u) N$ for some nonnegative $\filt{F}$-predictable process $\theta$ and some  $(\filt{F}, \Qu)$-local martingale $N$ with $[N, S] = 0$. By assumption, $N$ is also a $(\filt{G}, \Qu)$-local martingale. By the product rule, so is $N Z$. Hence, $N$ is a nonnegative $(\filt{G}, \Pu)$-local martingale, thus an $(\filt{F}, \Pu)$-local martingale, as it is $\filt{F}$-adapted. Moreover, ${}^o Z^{\Qu} / N = \Exp(\int_0^\cdot \theta_u \ud  S_u)$ is an $(\filt{F}, \Qu)$-local martingale; hence $1/N$ is also an $(\filt{F}, \Pu)$-local martingale.  This implies that $N = 1$.
\end{proof}

\begin{remark}
	From a modelling point of view, it is convenient to observe that Jacod's hypothesis~(H) holds, for example, if 
	 $\filt{G}$ is of the form
	\begin{align*}
		\sigalg{G}_t \dfn \bigcap_{s > t} \left( \sigalg{F}_s \vee \sigalg{H}_s\right), \qquad t \geq 0,
	\end{align*}
	where $\filt{H}$ is a filtration such that $\sigalg{F}_\infty$ and $\sigalg{H}_\infty$ are independent under $\Qu$. Indeed, fix any $(\filt{F}, \Qu)$-martingale $N$, some $s,t \geq 0$ with $s < t$, and some $A \in \sigalg{H}_s$. Then
	\begin{align*}
		\E_\Qu[N_t  \indic_{A}] = \E_\Qu[N_t ] \Qu[A] =   \E_\Qu[N_s ] \Qu[A] = \E_\Qu[N_s \indic_{A}].
	\end{align*} 
	where we have used repeatedly the independence of  $\sigalg{F}_\infty$ and $\sigalg{H}_\infty$ under $\Qu$.  In particular, the Bayesian setup of Sect.~\ref{sec:Bayes} satisfies the assumptions of Proposition~\ref{P:180107.1}. 
	As a corollary, Assumption~\ref{Ass00} and Jacod's hypothesis~(H) holding under $\Qu$ do not imply that each $(\filt{F}, \Pu)$-martingale is also a $(\filt{G}, \Pu)$-martingale. For example, the process $1/\zeta$ in Sect.~\ref{sec:proof1} is an  $(\filt{F}, \Pu)$-local
martingale, but not a $(\filt{G}, \Pu)$-local martingale if $K^h_\infty > 0$.
\end{remark}

\section{Completeness and Such} \label{sec:counterexample}
\subsection{A motivating example}

We return to a question posed in the introduction: could a complete market become incomplete after shrinking the filtration.  We first provide a motivating example demonstrating that this is indeed possible. Theorem~\ref{thm:main} and Corollary~\ref{C:1} will also yield a slew of examples in a more comprehensive manner.

Let $W$ denote a standard Brownian motion and $B$ its L\'evy transformation, defined via
\[
B \dfn \int_0^\cdot \sign(W_u) \ud W_u.
\]
Consider the $\filt{F}^W$-stopping time 
\[
	\tau \dfn \inf \{t \geq 0:  W_t = 1\},
\]
noting that $\tau$ is not a stopping time under the filtration $\filt{F}^B = \filt{F}^{|W|}$.  Set ${\filt{G}} \dfn \filt{F}^W$ and $\filt{F} \dfn  \filt{F}^{B, \indic_{ \dbraco{\tau, \infty}}}$, the smallest right-continuous filtration that makes $B$ adapted and $\tau$ a stopping time. It follows that $\filt{F} \subseteq{\filt{G}}$, and that the (one-dimensional) stock price $S \dfn \Exp(B)$ is $\filt{F}$-adapted.  
Both $B$ and $S$ have the predictable representation property under $\filt{G}$, rendering market completeness under $\filt{G}$.  

Consider the $\filt{F}^B$-stopping times $(\rho_i)_{i \in \mathbb N_0}$ and $(\tau_i)_{i \in \mathbb N}$, defined inductively by $\rho_0 = 0$ and 
\[
	\tau_i \dfn \inf \{t >\rho_{i-1}:  |W_t| = 1\}; \qquad \rho_i \dfn \inf \{t >\tau_i:  |W_t| = 0\}, \qquad i \in \mathbb N.
\]
These stopping times allow to define the $\filt{F}$-adapted process
\begin{align*}
	N \dfn \indic_{ \dbraco{\tau, \infty}} - \frac{1}{2} \sum_{i \in \mathbb N} \indic_{ \dbraco{\tau_i, \infty}} \indic_{\{\tau_i \leq \tau\}},
\end{align*}
which is piecewise constant and jumps only at the times before $\tau$ when $|W|$ hits one. More precisely, $N$ jumps up or down by $1/2$ with probability $1/2$, depending on whether $W$ hits $1$ or $-1$; hence, it is an $\filt{F}$-martingale, but not a $\filt{G}$-local martingale. The discontinuous process $N$ \emph{a fortiori} cannot be expressed as a stochastic integral with respect to the geometric Brownian motion $S$; hence, the market is indeed incomplete under $\filt{F}$. 
Note that this observation is consistent with the martingale representation results in Br\'emaud and Yor \cite[Proposition~9]{Bremaud:Yor}.

\subsection{A more general construction} \label{SS:5.2}

The following result is of independent interest.

\begin{theorem} \label{thm:main}
	Let $W$ be a standard Brownian motion, and let $B$ denote its L\'evy transformation. Then, for \emph{any} given probability law $\mu$ on $((0, \infty], \B(0, \infty])$, there exists an $ \filt{F}^{W}$-stopping time $\tau$ with law $\mu$, independent of $\sigalg{F}^B_\infty$.
\end{theorem}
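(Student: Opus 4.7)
The plan is to exploit the classical fact---see Blumenthal \cite{Blumenthal:1992} and Revuz and Yor \cite{RY}---that, conditionally on $\sigalg{F}^B_\infty = \sigalg{F}^{|W|}_\infty$, the signs of the excursions of $W$ away from zero are i.i.d.\ fair $\pm 1$ Bernoulli random variables. First, I would extract a convenient such sequence adapted to $\filt{F}^W$: set $\sigma_0 := 0$ and, recursively, $\rho_n := \inf\{t \geq \sigma_{n-1} : W_t = 0\}$ and $\sigma_n := \inf\{t \geq \rho_n : |W_t| = 1\}$. Each $\rho_n$, $\sigma_n$ is an $\filt{F}^W$-stopping time and is $\sigalg{F}^B_\infty$-measurable (as a hitting time of $|W|$), with $\sigma_n \uparrow \infty$ a.s. Setting $\epsilon_n := \sign(W_{\sigma_n}) \in \{-1, +1\}$, the strong Markov property applied at each $\rho_n$ (where $W$ restarts from $0$) shows that $(\epsilon_n)_{n \geq 1}$ is i.i.d.\ with $\Pu[\epsilon_n = +1] = 1/2$ and independent of $\sigalg{F}^B_\infty$.

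Form $U := \sum_{n \geq 1} 2^{-n} \indic_{\{\epsilon_n = +1\}}$, which is uniform on $[0,1]$ and independent of $\sigalg{F}^B_\infty$. Let $F$ be the distribution function of $\mu$ on $(0, \infty]$ and $F^{-1}$ its quantile function. Then $\tau^\star := F^{-1}(U)$ has law $\mu$ and is independent of $\sigalg{F}^B_\infty$, but \emph{a priori} it is only $\sigalg{F}^W_\infty$-measurable: the event $\{\tau^\star \leq t\} = \{U \leq F(t)\}$ depends on all infinitely many bits of $U$, whereas by time $t$ only the signs $\epsilon_n$ with $\sigma_n \leq t$ are known.

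The main obstacle is thus to promote $\tau^\star$ to a bona fide $\filt{F}^W$-stopping time. My plan is to replace the rigid read-out family $(\sigma_n)$ by an \emph{adaptive} one: at stage $n$, having narrowed $\tau^\star$ to the interval $I_n := [F^{-1}(U_n), F^{-1}(U_n + 2^{-n})]$ (where $U_n$ denotes the first $n$ bits of $U$), select the $(n+1)$-th read-out as the sign of a canonical excursion completed strictly inside $I_n$. Because in every non-degenerate interval $W$ completes infinitely many excursions, and their signs, conditional on $|W|$, are i.i.d.\ fair coins independent of the $|W|$-measurable excursion structure, any $\sigalg{F}^{|W|}$-measurable rule for selecting the excursion produces a sign that is a fresh fair Bernoulli, independent of $\sigalg{F}^B_\infty$ and of the previously extracted signs. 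Iterated, the construction reveals every bit of $U$ strictly before $\tau^\star$, so $\{\tau^\star \leq t\} \in \sigalg{F}^W_t$ for every $t \geq 0$. The most delicate part will be designing the recursive excursion-selection rule so that the excursion's completion time always precedes $\tau^\star$ itself---the density of excursions near any fixed time makes this possible, but some bookkeeping is required, together with minor adjustments for atoms of $\mu$ (including a possible atom at $+\infty$, on whose event the recursion becomes vacuous).
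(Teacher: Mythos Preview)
Your plan correctly identifies the key independence ingredient---the excursion signs of $W$ are i.i.d.\ fair coins given $|W|$---and you are right that the naive $\tau^\star = F^{-1}(U)$ built from the hitting times of level $1$ fails the stopping-time property. The gap is in the adaptive repair. Reading bit $n+1$ from an excursion ``completed strictly inside $I_n$'' only places its reading time $T_{n+1}$ somewhere in $I_n$; since $\tau^\star$ also lies in $I_n$, nothing forces $T_{n+1} < \tau^\star$. You appeal to the ``density of excursions near any fixed time'', but excursions accumulate only at zeros of $W$, and even at a zero you face a circularity: to choose an excursion close enough to the left endpoint $a_n = F^{-1}(U_n)$ so that it completes before $\tau^\star$, you would need to know how far $\tau^\star$ lies from $a_n$---which is precisely what the unread bits encode. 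Concretely, for $t$ with $a_{n-1} \le t < \tau^\star$ and the selected excursion in $I_{n-1}$ completing after $t$, the recursion stalls at stage $n$ and $\{\tau^\star \le t\}$ is not decidable from $(W_s)_{s \le t}$; so $\tau^\star$ need not be an $\filt{F}^W$-stopping time.

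The paper avoids this circularity by harvesting the randomness near time $0$ rather than near $\tau$. With inverse local time $\sigma_s = \inf\{t : \Lambda_t > s\}$ and a fixed sequence $s_n \downarrow 0$, the (countably many) excursions in each local-time window $(s_{n+1}, s_n]$ yield an independent uniform $U_n$ that is already $\sigalg{F}^W_{\sigma_{s_n}}$-measurable, with $\sigma_{s_n} \downarrow 0$ almost surely. One does not then form a single $F^{-1}(U)$; instead one builds a \emph{nonincreasing} sequence of $\filt{F}^W$-stopping times $\tau_n > \sigma_{s_n}$, each with law equal to $\mu$ conditioned on $(\sigma_{s_n}, \infty]$, via a simple coupling that keeps $\tau_{n+1} \le \tau_n$. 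The limit $\tau = \lim_n \tau_n$ is then a stopping time with law $\mu$, independent of $\sigalg{F}^B_\infty$. The idea you are missing is exactly this: place the randomness at times tending to $0$ and approach $\tau$ from above via conditioned laws, rather than trying to reveal bits adaptively inside the unknown target interval.
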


Theorem \ref{thm:main}, proved in \S \ref{SS:5.2bis} below, allows for an interesting class of examples where the completeness property fails through filtration shrinkage.

\begin{corollary} \label{C:1}
	There exist two nested filtrations $\filt{F} \subseteq\filt{G}$ and a one-dimensional continuous stock price process $S$, adapted to $\filt{F}$, such that the market is complete under $\filt{G}$ and under $\filt{F}^S$, but not under the ``intermediate information'' model $\filt{F}$.
\end{corollary}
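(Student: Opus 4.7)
The plan is to construct the desired triple $(\filt{F}, \filt{G}, S)$ using Theorem~\ref{thm:main} to enrich the motivating example at the start of Sect.~\ref{sec:counterexample}. Let $W$ be a standard Brownian motion, $B$ its L\'evy transform, and fix any law $\mu$ on $(0, \infty)$ with a continuous, strictly positive density $f$ (e.g.~the unit-rate exponential law, for concreteness). By Theorem~\ref{thm:main}, there exists an $\filt{F}^W$-stopping time $\tau$ with law $\mu$ that is independent of $\sigalg{F}^B_\infty$. Set $\filt{G} \dfn \filt{F}^W$, let $\filt{F}$ denote the smallest right-continuous filtration containing $\filt{F}^B$ for which $\tau$ is a stopping time (the progressive enlargement of $\filt{F}^B$ by $\tau$), and put $S \dfn \Exp(B)$; note that $\filt{F}^S = \filt{F}^B$.

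First I would verify the structural and ``easy'' completeness requirements. Since $\tau$ is an $\filt{F}^W$-stopping time and $B$ is $\filt{F}^W$-adapted, the inclusions $\filt{F}^S = \filt{F}^B \subseteq \filt{F} \subseteq \filt{F}^W = \filt{G}$ hold, so $S$ is $\filt{F}$-adapted. Completeness under $\filt{G}$ follows because $B$ enjoys the predictable representation property on $\filt{F}^W$ (recalled in the introduction); combined with $\ud S = S\, \ud B$ and the strict positivity of $S$, every $\filt{G}$-local martingale is expressible as a stochastic integral against $S$, so the $\filt{G}$-local martingale deflator is unique and Stricker and Yan's theorem applies. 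The same argument, using that $B$ has the PRP on its own filtration $\filt{F}^B$, yields completeness of the market under $\filt{F}^S = \filt{F}^B$.

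The main obstacle is to establish the failure of $\filt{F}$-completeness. The key point is that the independence of $\tau$ from $\sigalg{F}^B_\infty$ supplied by Theorem~\ref{thm:main} makes the $(\filt{F}, \Pu)$-compensator of $\indic_{\dbraco{\tau, \infty}}$ equal to the deterministic, continuous process $\Lambda_t \dfn \int_0^{t \wedge \tau} f(u)/(1-F(u))\, \ud u$, where $F$ denotes the CDF of $\mu$; hence $N \dfn \indic_{\dbraco{\tau, \infty}} - \Lambda$ is a bounded, purely discontinuous $\filt{F}$-martingale with a single jump of size one at $\tau$. Since $S$ has continuous paths, every stochastic integral $\int_0^\cdot \theta_u \ud S_u$ is continuous, so $N$ cannot be so represented. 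To turn this into a failure of completeness, note that $[S, N] = 0$ by continuity of $S$, so integration by parts (together with $[Y, N] = 0$, valid for any continuous $\filt{F}$-local martingale deflator $Y$, whose existence follows from Sect.~\ref{sec:proof1}) implies that the tilted process $Y \Exp(\alpha N)$ is a strictly positive $\filt{F}$-local martingale deflator for each $\alpha \in (-1, \infty) \setminus \{0\}$, and distinct $\alpha$ yield distinct deflators. Uniqueness thus fails, and Stricker and Yan's theorem delivers $\filt{F}$-incompleteness. The most delicate ingredient is the computation of the $\filt{F}$-compensator of $\indic_{\dbraco{\tau, \infty}}$, which rests squarely on the independence assertion of Theorem~\ref{thm:main}.
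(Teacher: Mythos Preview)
Your proposal is correct and follows essentially the same construction as the paper's own proof: both take $\filt{G} = \filt{F}^W$, $S = \Exp(B)$, $\filt{F}$ the progressive enlargement of $\filt{F}^B$ by the $\filt{F}^W$-stopping time $\tau$ supplied by Theorem~\ref{thm:main}, and both exhibit the compensated jump martingale $N = \indic_{\dbraco{\tau,\infty}} - C^\tau$ to defeat the predictable representation property in $\filt{F}$. The paper leaves the compensator implicit and stops at the observation that the discontinuous $N$ cannot be an $S$-integral; you go further by specialising $\mu$ to have a continuous positive density, computing the compensator explicitly via the hazard function, and then manufacturing the family $\Exp(\alpha N)$ of distinct $\filt{F}$-deflators to invoke Stricker--Yan directly. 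One small simplification: rather than appealing to Sect.~\ref{sec:proof1} for the existence of a continuous $\filt{F}$-deflator, you can observe directly that the independence of $\tau$ from $\sigalg{F}^B_\infty$ makes $\filt{F}^B$ immersed in $\filt{F}$, so $B$ remains an $\filt{F}$-Brownian motion and $Y \equiv 1$ already serves as your continuous $\filt{F}$-deflator.
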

\begin{proof}
	Using the above notation, set ${\filt{G}} \dfn \filt{F}^W$ and $S \dfn \Exp(B)$, where we recall that $\Exp(\cdot)$ denotes the stochastic exponential operator, and note that $\filt{F}^S = \filt{F}^B$.  Next, take $\mu$ to be the law of a non-deterministic distribution, and consider an $\filt{F}^W$-stopping time $\tau$ as in Theorem~\ref{thm:main}, with distribution $\mu$. Define now $\filt{F}$ to be the right-continuous modification of the progressive enlargement of the filtration $\filt{F}^B$ with the random time $\tau$. Clearly, $\filt{F}^S = \filt{F}^B \subseteq\filt{F} \subseteq\filt{F}^W = {\filt{G}}$, and $B$ is a Brownian motion on all three considered filtrations. However, although $B$ (hence $S$) has the predictable representation property on both $\filt{F}^B$ and $\filt{F}^W$, it loses the predictable representation property on $\filt{F}$. This can be readily seen by considering the (non-continuous) $\filt{F}$-local martingale $N$ defined via $N = \indic_{\dbraco{ \tau, \infty}} - C^\tau$, where $C^\tau$ denotes the compensator of $\indic_{\dbraco{ \tau, \infty}}$ under $\filt{F}$. 
\end{proof}

In the context of the proof of Corollary \ref{C:1}, the pair $(B, N)$ jointly has the predictable representation property on $\filt{F}$. It follows that every local martingale deflators in $\mathcal Y^{\filt{F}}$ is of the form $\indic_{\dbraco{0, \tau}}  + g(\tau) \indic_{\dbraco{\tau, \infty}}$ for some strictly positive Borel-measurable function $g: (0,\infty) \rightarrow (0,\infty)$ with $\int_0^\infty g(s) \mu [\ud s] = 1$.

\subsection{Proof of Theorem \ref{thm:main}} \label{SS:5.2bis}

Before we state the main auxiliary result in order to prove Theorem \ref{thm:main}, we discuss some prerequisites on excursions of Brownian motion. We keep as much as possible notation from Revuz and Yor \cite[Chapter XII]{RY}. For a continuous function $w: [0,\infty) \rightarrow \R$ with $w(0) = 0$, set $R(w) \dfn \inf \set{t > 0 \such w(t) = 0}$. Then, let $\mathcal{U}$ be the subset of all continuous functions $w$ such that $w(0) = 0$, $R(w) > 0$, and $w(t) = 0$ for all $t \geq R(w)$. We denote by $\mathcal{U}_+$ (respectively, $\mathcal{U}_-$) the subset of $\mathcal{U}$ with the extra property that $w(t) > 0$ (respectively, $w(t) < 0$) holds for all $t \in(0, R(w))$, in which case we speak of positive (respectively, negative) excursions. With $\delta : [0,\infty) \rightarrow \Real$ denoting the function that is identically equal to zero (which in particular implies that $\delta \notin \mathcal{U}$), consider the state space $\mathcal{U}_\delta \dfn \mathcal{U} \cup \set{\delta}$.

Recalling that $\Lambda$ is the local time of the Brownian motion $W$ at zero, define 
\[
\sigma_s \dfn \inf \set{t > 0 \such \Lambda_t > s}, \quad s \in [0, \infty),
\]
and note that this is a stopping time in $\filt{F}^{B}  = \filt{F}^{|W|}$ for all $s \in [0, \infty)$. We denote by $(e_s)_{s \in [0, \infty)}$ the excursion Poisson  point process of $W$. More precisely, for $s \in [0, \infty)$ with $\sigma_{s-} = \sigma_s$ we set $e_s \equiv \delta$, while if $\sigma_{s-} < \sigma_s$ then $e_s \in \mathcal{U}$ will be the excursion of $W$ over the interval $[\sigma_{s-}, \sigma_s]$, defined via $e_s (t) = W(\sigma_{s-} + t)$ for $t \in [0, \sigma_s - \sigma_{s-}]$, and $e_s (t) = 0$ for $t > \sigma_s - \sigma_{s-}$. 
We shall use $|e| \dfn (|e|_s)_{s \in [0, \infty)}$ to denote the process such that $|e|_s (t) = |e_s (t)|$ holds for all $s, t \geq 0$, and note that $|e|$ is also a Poisson point process, with state space $\mathcal{U}_+ \cup \set{\delta}$; in effect, $|e|$ forgets the excursion signs.

With the above notation set, we need the following facts:
\begin{enumerate}
	\item The sigma-algebra generated by the Poisson point process $|e| = (|e|_s)_{s \in [0, \infty)}$ coincides with $\sigalg{F}^{|W|}_\infty=\sigalg{F}^{B}_\infty$.
	\item Conditionally on the process $|e|$ (i.e., conditional on $\sigalg{F}^{|W|}_\infty$), the signs of the excursions are (a countable number of) independent and identically distributed random variables taking the values $-1$ and $+1$ with probability $1/2$.
\end{enumerate}

The first statement above is a consequence of \cite[Chapter XII, Proposition 2.5]{RY}. Indeed, since $\Lambda$ is $\sigalg{F}^{|W|}_\infty$-measurable, meaning $\Lambda_s$ and
$\sigma_s$ are $\mathcal F_\infty^{|W|}$-measurable for each $s \geq 0$, the process $(\sigma_s)_{s \geq 0}$ is also $\sigalg{F}^{|W|}_\infty$-measurable, and then it is straightforward that $(|e|_s)_{s \geq 0}$ is $\sigalg{F}^{|W|}_\infty$-measurable. On the other hand, one may reconstruct $|W|$ from $|e|$ as follows: first, for $s \geq 0$ one defines $\sigma_s \dfn \sum_{v \in (0, s]} R(|e|_v)$, then one obtains $\Lambda$ as the right-continuous inverse of $\sigma$, and then one defines $|W_t| \dfn |e|_{\Lambda_t} (t - \sigma(\Lambda_t -))$ for all $t \geq 0$.

The second statement above comes, for example, as a consequence of the discussion in Blumenthal \cite[Chapter IV, mostly page~114]{Blumenthal:1992}; see also Prokaj \cite{Prokaj:2009}.

The following result is the main tool in establishing the validity of Theorem \ref{thm:main}.

\begin{lemma} \label{lem:aux}
Fix a strictly decreasing sequence $(s_n)_{n \in \mathbb N}$ in $(0, \infty)$ such that $\lim_{n \uparrow \infty} s_n = 0$. Then, there exists a countable collection $(U_n)_{n \in \mathbb N}$ of random variables such that:
\begin{itemize}
	\item for each $\nin$, $U_n$ is $\sigalg{F}^W_{\sigma_{s_n}}$-measurable; and
	\item $(U_n)_{\nin}$ consists of independent and identically distributed random variables with the standard uniform law, and is furthermore independent from $\sigalg{F}^{|W|}_\infty=\sigalg{F}^{B}_\infty$.
\end{itemize}
\end{lemma}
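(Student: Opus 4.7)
The strategy is to harvest independent excursion signs from disjoint strata of local time and assemble them into uniform random variables via dyadic expansion. The two key ingredients are the facts recalled just before the lemma: $\sigalg{F}^{|W|}_\infty = \sigalg{F}^B_\infty$ coincides with the sigma-algebra generated by the point process $|e|$ of absolute values of excursions, while, conditional on $\sigalg{F}^{|W|}_\infty$, the sign sequence of the countably many excursions of $W$ is i.i.d.\ Bernoulli on $\{-1,+1\}$ with parameter $1/2$.

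For each $\nin$, consider the pairwise disjoint intervals $I_n \dfn (s_{n+1}, s_n]$. Because It\^o's excursion measure is $\sigma$-finite with infinite total mass yet finite mass on $\{R > \varepsilon\}$ for every $\varepsilon > 0$, within $|e|$ there are $\Pu$-a.s.\ countably infinitely many excursions whose local-time label lies in $I_n$, with only finitely many of length exceeding any prescribed threshold. This enables an $\sigalg{F}^{|W|}_\infty$-measurable enumeration $(e_{n,k})_{\kin}$ of these excursions, for instance by decreasing length with ties broken by local-time label. Letting $\xi_{n,k} \in \{-1,+1\}$ denote the sign of $e_{n,k}$, define
\[
U_n \dfn \sum_{k \in \mathbb{N}} 2^{-k} \indic_{\{\xi_{n,k} = +1\}}, \qquad n \in \mathbb{N}.
\]
Because the enumeration is $\sigalg{F}^{|W|}_\infty$-measurable while the conditional law of $(\xi_{n,k})_{n,k}$ given $\sigalg{F}^{|W|}_\infty$ is a non-random i.i.d.\ Bernoulli product, the family $(\xi_{n,k})_{n,k}$ is unconditionally i.i.d.\ Bernoulli and independent of $\sigalg{F}^{|W|}_\infty$. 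A standard dyadic-expansion argument then yields that each $U_n$ is uniformly distributed on $[0,1]$, the sequence $(U_n)_{\nin}$ is mutually independent (as its elements are measurable functions of disjoint blocks of an i.i.d.\ family), and it is jointly independent of $\sigalg{F}^{|W|}_\infty$. Finally, every excursion with label in $I_n$ completes by time $\sigma_{s_n}$, so both its length and its sign are $\sigalg{F}^W_{\sigma_{s_n}}$-measurable; hence $U_n \in \sigalg{F}^W_{\sigma_{s_n}}$ as required.

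The only delicate point is producing an $\sigalg{F}^{|W|}_\infty$-measurable enumeration of the excursions inside $I_n$, which is a standard consequence of the Poisson point process structure of $|e|$: for each $m \in \mathbb{N}$, the excursions in $I_n$ of length greater than $1/m$ form a finite and $\sigalg{F}^{|W|}_\infty$-measurable collection, and a measurable labelling of the full collection is obtained by exhausting $m \uparrow \infty$. Once this is granted, the argument reduces to the conditional independence of signs given $\sigalg{F}^{|W|}_\infty$ and the elementary fact that a standard uniform random variable may be built from its dyadic expansion.
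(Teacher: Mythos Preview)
Your proposal is correct and follows essentially the same route as the paper: partition local time into the disjoint intervals $I_n=(s_{n+1},s_n]$, enumerate the (infinitely many) excursions within each $I_n$ in an $\sigalg{F}^{|W|}$-measurable way via their lengths, read off their signs as i.i.d.\ fair Bernoulli variables conditionally independent of $\sigalg{F}^{|W|}_\infty$, and assemble each block into a uniform $U_n$ by dyadic expansion. The only cosmetic difference is that the paper phrases the enumeration as $\sigalg{F}^W_{\sigma_{s_n}}$-measurable from the outset, whereas you first call it $\sigalg{F}^{|W|}_\infty$-measurable and then note it is in fact determined by $|W|$ up to $\sigma_{s_n}$; both amount to the same thing.
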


\begin{proof}
For each $\nin$, write $\sigma_n \dfn \sigma_{s_n}$ for typographical simplicity. Consider the intervals $I_n \dfn (s_{n+1}, s_n]$; since the sequence $(s_n)_{\nin}$ is strictly decreasing, $(I_n)_{\nin}$ consists of disjoint intervals.

Define the random, $\Pu$-a.e.~countable set 
\[
	D \dfn \set{s \in [0, \infty) \such e_s \neq \delta} = \set{s \in [0, \infty) \such \sigma_{s-} < \sigma_s}\]
	where excursions actually happen in the local time clock.  The set $D \cap I_n$ corresponds to excursion times that happen in $I_n$ in the local time clock, and is clearly countable and infinite. Furthermore, $(e_s)_{s \in I_n}$ is $\sigalg{F}^W_{\sigma_n}$-measurable, for all $n \in \Natural$. It is straightforward (for example, by ordering the excursion sizes) to see that one may find an $\sigalg{F}^W_{\sigma_n}$-measurable enumeration $(v_{n, k})_{\kin}$ of $D \cap I_n$, for each $n \in \Natural$. Then, for all $\nin$, the $\sigalg{F}^W_{\sigma_n}$-measurable random variables $X_{n, k} \dfn \indic_{\{e_{v_{n, k}} \in \mathcal{U}_+\}}$ are $\set{0, 1}$-valued. Moreover, using the fact that the intervals $(I_n)_{\nin}$ are disjoint, we obtain that, conditional on $\sigalg{F}^{B}_\infty = \sigalg{F}^{|W|}_\infty$, the doubly-indexed collection $(X_{n, k})_{(n, k) \in \Natural \times \Natural}$ consists of independent and identically distributed random variables with 
	\[
		\Pu \bra{X_{n,k} = 0} = \frac{1}{2} = \Pu \bra{X_{n,k} = 1}.
	\] Therefore, upon defining $U_n \dfn \sum_{k=1}^{\infty}2^{-k} X_{n, k}$ for all $\nin$, we obtain a sequence $(U_n)_{\nin}$ of independent and identically distributed random variables with the standard uniform law, that are further independent of $\sigalg{F}^{|W|}_\infty$. Finally, since  $(X_{n, k})_{k \in \Natural}$ are $\sigalg{F}^W_{\sigma_n}$-measurable for each $\nin$, we obtain that $U_n$ is $\sigalg{F}^W_{\sigma_n}$-measurable for each $\nin$, which completes the argument.
\end{proof}

Given Lemma~\ref{lem:aux} above, we may now proceed to prove Theorem~\ref{thm:main}. 
\begin{proof}[of Theorem~\ref{thm:main}.]
Let $\mu$ be any probability law on $((0, \infty], \B((0, \infty]) )$. For any $s \in [0, \infty)$, let $\mu_s$ denote the probability law on $((0, \infty], \B((0, \infty]) )$ that is ``$\mu$ conditioned to be greater than $s$;'' more formally, if $\mu((s, \infty]) = 0$ then set $\mu_s( A) \dfn \indic_{\infty \in A}$ for all $A \in \B((0, \infty])$; otherwise,  if $\mu((s, \infty]) > 0$, set
\[
\mu_s (A) \dfn \frac{\mu(A \cap (s, \infty])}{\mu((s, \infty])}, \quad A \in \B((0, \infty]).
\]
Note that $(\mu_s)_{s \geq 0}$ is increasing in first-order stochastic dominance and that, as $s \downarrow 0$, $\mu_s$ converges (actually, in total variation) to $\mu_0 = \mu$.

Pick $(s_n)_{\nin}$ to be any strictly decreasing sequence of positive numbers with $\lim_{n \uparrow \infty} s_n = 0$. In the notation of Lemma~\ref{lem:aux}, consider a corresponding sequence $(U_n)_{\nin}$. We shall construct inductively a \emph{nonincreasing} sequence $(\tau_n)_{\nin}$ of $\filt{F}^W$-stopping times each having conditional law with respect to $\sigalg{F}^B_\infty$ equal to $\mu_{\sigma_{s_n}}$, and being measurable with respect to $\sigalg{F}^B_\infty \vee \sigma(U_m)_{m \leq n}$. 

As in the  proof of Lemma~\ref{lem:aux}, for each $\nin$ write $\sigma_n \dfn \sigma_{s_n}$, and note that $\sigma_n$ is a stopping time in $\filt{F}^B = \filt{F}^{|W|}$ for all $\nin$. Let $F_s : (0, \infty] \rightarrow [0,1]$ be the cumulative distribution function of $\mu_s$ defined via $F_s(t) \dfn \mu_s ((0, t])$ for all $t > 0$ and $s \geq 0$, and set $\tau_1 \dfn F_{\sigma_1}^{-1} (U_1)$, where we use the generalized right-continuous inverse. Since $\tau_1 > \sigma_1$ and $U_1$ is $\sigalg{F}^W_{\sigma_1}$-measurable, it is clear that $\tau_1$ is an $\filt{F}^W$-stopping time. By construction, and since $U_1$ is independent of $\sigalg{F}^{B}_\infty$, the conditional law of $\tau_1$ with respect to $\sigalg{F}^B_\infty$ equals $\mu_{\sigma_1}$. Fix now $n \in \Natural$, and suppose that one has constructed $\tau_n$, which is an $\filt{F}^W$-stopping time, measurable with respect to $\sigalg{F}^B_\infty \vee \sigma(U_m)_{m \leq n}$, and having conditional law with respect to $\sigalg{F}^B_\infty$ equal to $\mu_{\sigma_n}$. Set $\tau'_{n+1} \dfn F_{\sigma_{n+1}}^{-1} (U_{n+1})$ which, since $\tau'_{n+1} > \sigma_{n+1}$ and $U_{n+1}$ is $\sigalg{F}^W_{\sigma_{n+1}}$-measurable, is an $\filt{F}^W$-stopping time. Given $\sigalg{F}^B_\infty$, the conditional law of $\tau'_{n+1}$ is $\mu_{\sigma_{n+1}}$. Set then 
\[
	\tau_{n+1} \dfn \tau'_{n+1} \indic_{\tau'_{n+1} \leq \sigma_n} + \tau_{n} \indic_{\tau'_{n+1} > \sigma_n}. 
\]
Since $U_{n+1}$ is $\sigalg{F}^W_{\sigma_{n+1}}$-measurable and $\tau_n > \sigma_n$, it follows that $\tau_{n+1}$ is an $\filt{F}^W$-stopping time, and measurable with respect to $\sigalg{F}^B_\infty \vee (U_m)_{m \leq n + 1}$. Clearly, $\tau_{n+1} \leq \tau_n$. 
Furthermore, using conditional independence of $\tau'_{n+1}$ and $\tau_n$ given $\sigalg{F}^B_\infty$, which follows from the conditional independence of $U_{n+1}$ from $(U_m)_{m \leq n}$ given $\sigalg{F}^B_\infty$, and using the shorthand notation $\Pu^B \bra{\cdot} \dfn \Pu \bra{\cdot \such \sigalg{F}^B_\infty}$ for conditional probabilities given $\sigalg{F}^B_\infty = \sigalg{F}^{|W|}_\infty$, one obtains
\begin{align*} 
\Pu^B \bra{\tau_{n+1} \in A} &= \Pu^B \bra{\tau'_{n+1} \in A \cap (0, \sigma_n]} + \Pu^B \bra{ \tau'_{n+1} > \sigma_n, \tau_{n} \in A \cap (\sigma_{n}, \infty] } \\
&= \Pu^B \bra{\tau'_{n+1} \in A \cap (0, \sigma_n]} + \Pu^B \bra{\tau'_{n+1} > \sigma_n} \Pu^B \bra{\tau_{n} \in A \cap (\sigma_{n}, \infty]} \\
&= \mu_{\sigma_{n+1}} \pare{A \cap (0, \sigma_n]} + \mu_{\sigma_{n+1}} \pare{(\sigma_n, \infty]} \mu_{\sigma_n} \pare{A \cap (\sigma_n, \infty]} \\
&= \frac{\mu \pare{A \cap (\sigma_{n+1}, \sigma_n]}}{\mu((\sigma_{n+1}, \infty])} + \frac{\mu \pare{(\sigma_{n}, \infty]}}{\mu((\sigma_{n+1}, \infty])} \frac{\mu \pare{A \cap (\sigma_n, \infty]}}{\mu((\sigma_{n}, \infty])} \\
&= \frac{\mu \pare{A \cap (\sigma_{n+1}, \infty]}}{\mu((\sigma_{n+1}, \infty])} = \mu_{\sigma_{n+1}} (A), \quad A \in \B((0, \infty]),
\end{align*}
which implies that the conditional law of $\tau_{n+1}$ given $\sigalg{F}^B_\infty$ is $\mu_{\sigma_{n+1}}$. The inductive step is complete.

Define now $\tau \dfn \lim_{n \uparrow \infty} \tau_n = \bigwedge_{\nin} \tau_n$; since $\tau_n$ is an $\filt{F}^W$-stopping time for all $\nin$, $\tau$ is also an $\filt{F}^W$-stopping time.
Given that the conditional law of $\tau_{n}$ given $\sigalg{F}^B_\infty$ is $\mu_{\sigma_{n}}$ for each $\nin$ and that $(\sigma_n)_{\nin}$ decreases $\Pu$-a.e.~to zero, it follows that the conditional law of $\tau$ given $\sigalg{F}^B_\infty$ is $\mu_0 = \mu$. This implies both that $\tau$ is independent of $\sigalg{F}^B_\infty$, and that its probability law equals $\mu$, which concludes the proof of Theorem~\ref{thm:main}.
\end{proof}

\subsection{A further example where incompleteness arises though filtration shrinkage} \label{SS:5.3}

The markets described in Corollary~\ref{C:1} have an interesting ``quasi completeness'' property: for each $T \geq 0$, any nonnegative bounded $\sigalg{F}^S_T$-measurable contingent claim $\xi$ can by replicated; i.e., in the notation of Sect.~\ref{S:notation}
and with 
$x = x^{\filt{F}}(T, \xi)$ there exists a maximal 
$X \in\mathcal X^{\filt{F}}(x)$ 
such that $\Pu[X_T = \xi] = 1$.
See Ruf \cite[Section~3.4]{Ruf_ots} for a discussion of this weaker notion of completeness.  Below, we shall provide an example where the market is complete under the large filtration $\filt{G}$, but not even quasi-complete under the smaller filtration $\filt{F}$; this example answers negatively a conjecture by Jacod and Protter put forth in Jacod and Protter \cite{Jacod:Protter:2017}.

Let $B$ be a one-dimensional Brownian motion, and set $\filt{G}$ the right-continuous filtration generated by $B$; i.e., $\filt{G} \dfn \filt{F}^B$. 
Set $S \dfn \Exp(\int_0^\cdot \theta_u \ud B_u)$, where
\[
\theta_t \dfn \begin{cases}
B_t, \quad &t < 1; \\
1, \quad &t \geq  1, \ B_1 > 0; \\
2, \quad &t \geq  1, \ B_1 \leq 0. 
\end{cases}
\]
Since $\theta$ is nonzero, $(\Pu \times [B,B])$-a.e., the market is complete in $\filt{G}$.
 Now, let $\filt{F} \dfn \filt{F}^S$ be the right-continuous filtration generated by $S$. Since 
 \[
 	S_t = \exp\left( \frac{B_t^2 }{2} - \frac{1}{2} \int_0^t (1 + 
B_s^2)\right) \ud s
\]
 holds for all $t \leq 1$, it follows that $\sigalg{F}_t = \sigalg{F}^{|B|}_t$ holds for all $t < 1$. On the other hand, for all $t > 1$, since 
 \[
 	\set{B_1 > 0} = \set{[\log S,\log S]_t - [\log S,\log S]_1 = t - 1} \in \sigalg{F}_t, 
\]
it follows that $B_1 \in \sigalg{F}_t$. Furthermore, $B_s - B_1$ is also $\sigalg{F}_t$-measurable, which then implies that $(B_s)_{s \in [1, t]}$ is  $\sigalg{F}_t$-measurable, for all $t > 1$ and $s \in [1,t]$. Using right-continuity of the filtration, it then easily follows that, for all $t \geq 1$,  $\sigalg{F}_t$ is generated by $\sigalg{F}^{|B|}_1$ and $(B_s)_{s \in [1, t]}$.

Here, there is a jump of information at the filtration $\filt{F}$ happening exactly at the (deterministic) time $1$: the sign of $B$ at $t=1$ is suddenly revealed, while the only previous information was on the absolute value of $B$. In particular, any process of the form
\[
Z^\alpha \dfn 1 + \pare{\indic_{\{B_1 > 0\}} -\indic_{\{B_1 \leq 0\}}} \alpha  \indic_{\dbraco{1, \infty}}
\]
for all $\alpha \in (-1,1)$ is a strictly positive $\filt{F}$-martingale, and it is clearly purely discontinuous. Therefore, $S$ cannot have the predictable representation property in $\filt{F}$. 

To expand further, consider any claim of the form $f(\indic_{\{B_1 > 0\}})$, with delivery at time $1$, where $f: \set{0,1} \rightarrow [0, \infty)$. Its hedging cost  is $(f(0) + f(1)) / 2$ under the larger filtration $\filt{G}$. Under the filtration $\filt{F}$, its hedging cost is at least
\[
\sup_{\alpha \in (-1,1)} \E \bra{Z^\alpha_1 f(\indic_{\{B_1 > 0\}})} = \max \set{f(0), f(1)};
\]
in fact, this is the actual hedging cost since one may trivially hedge starting from this amount. This can also be argued in a ``dual'' way, observing that the probabilities $\Qu^\alpha$ constructed from $Z^\alpha$ for all $\alpha \in (-1,1)$ are exactly the class of equivalent local martingale measures under the filtration $\filt{F}$.

\begin{acknowledgements}
We would like to thank Umut Cetin, Claudio Fontana, Monique Jeanblanc, and Walter Schachermayer for discussions on the subject matter of this paper, as well as two anonymous referees and the Editor Martin Schweizer for their helpful comments.
\end{acknowledgements}

\bibliographystyle{spmpsci}      
\bibliography{aa_bib}   

\end{document}